\renewcommand{\dim}{\mbox{dim}\,}
\newtheorem{thm}{Theorem}[section]
\newtheorem{cor}[thm]{Corollary}
\newtheorem{lem}[thm]{Lemma}
\newtheorem{prop}[thm]{Proposition}
\newtheorem{defn}[thm]{Definition}
\newtheorem{exam}[thm]{Example}
\newtheorem{rem}[thm]{Remark}
\def\proof{{\parindent0pt {\bf Proof.\ }}}
\newcommand{\field}[1]{\mathbb{#1}}
\newcommand{\Q }{\field{Q}}
\newcommand{\Z }{\field{Z}}
\theoremstyle{definition}
\theoremstyle{remark}
\theoremstyle{Definition and Notation}
\begin{document}

\bibliographystyle{amsplain}


\title{Commutative rings with invertible-radical factorization}

\author{Malik Tusif Ahmed}
\address{Malik Tusif Ahmed\\Abdus Salam School of Mathematical Sciences, Government College University  Lahore, Pakistan
$$ E-mail\ address:\ tusif.ahmad92@gmail.com $$}

\author{Najib Mahdou}
\address{Najib Mahdou\\Laboratory of Algebra, Functional Analysis and Applications, \\Department of Mathematics, Faculty of Science and Technology of Fez, Box 2202,
University S.M. Ben Abdellah Fez, Morocco.
$$E-mail\ address:\ mahdou@hotmail.com$$}

\author{Youssef Zahir}
\address{ZAHIR Youssef\\Department of Mathematics, Faculty of Science and Technology of Fez, Box 2202,
University S.M. Ben Abdellah Fez, Morocco.
$$ E-mail\ address:\ youssef.zahir@usmba.ac.ma$$}

\thanks{The first named author is supported by ASSMS, GC University Lahore under a Postdoc fellowship.}

\subjclass[2010]{Primary 13B99; Secondary 13A15, 13G05, 13B21.}

\keywords{ISP-ring, strongly ISP-ring, trivial ring extension, amalgamated duplication of ring along an ideal.}

\begin{abstract}
In this paper, we study the classes of rings in which every proper (regular) ideal can be factored as an invertible ideal times a nonempty product of proper radical ideals. More precisely, we investigate the stability of these properties under homomorphic image and their transfer to various contexts of constructions such as direct product, trivial ring extension and amalgamated duplication of a ring along an ideal. 
Our results generate examples that enrich the current literature with new and original families of rings satisfying these properties.
\end{abstract}

\maketitle


\bigskip



\section{Introduction}
The opening part is devoted to some standard background material. Throughout this paper all rings are commutative with a nonzero unit and all modules are unitary. For a ring $A$ and an $A$-module $E$, we shall use $Z(A)$ to denote the set of zero-divisors of $A$ and refer to an element that is not contained in $Z(A)$ as being regular. Also, we will denote by  $Z(E)$ the set of zero-divisors on $E$. A regular ideal is an ideal that contains at least one regular element. For any undefined terminology see \cite{RG} and \cite{Huckaba}.\par
In \cite{MT}, T. Dumitrescu and the first named author of the current note introduced and studied the notion of  an ISP-domain, that is, integral domain whose ideals can be factored as an invertible ideal times a nonempty product of proper radical ideals (this terminology comes from ``invertible semiprime ideal"). They proved that if $A$ is an ISP-domain, then any factor domain of $A$ and any (flat) overring of $A$ are also ISP-domains. They also showed that if $A$ is an ISP-domain, then $A$ is strongly discrete Pr\"ufer (i.e. a Pr\"ufer domain having no idempotent prime ideal except the zero ideal) and every nonzero prime ideal of $A$ is contained in a unique maximal ideal. The relevant background on the underlying domain-theoretic properties and their generalizations is presented in the following paragraph.\par

In \cite{VY}, N. Vaughan and R. Yeagy introduced the class of SP-domains i.e. domains whose proper ideals are product of radical ideals (see also \cite{O1}). They proved that an SP-domain is almost Dedekind (see also \cite{A}). In \cite{SP}, T. Dumitrescu and the first named author of the current note generalized the study of the SP property to the context of arbitrary rings in two ways as follows. A ring $A$ is called an SP-ring (resp. SSP-ring) if each  proper regular ideal (resp. ideal) is a product of radical ideals.
In \cite{O2,O3}, Olberding introduced and studied the following class of rings. A ring $A$ is called a ZPUI-ring, if every proper ideal of $A$ can be factored as an invertible ideal times a nonempty product of prime ideals.

The following construction was introduced by Nagata \cite[p. 2]{Nagata}. The trivial ring extension of a ring $A$ by an $A$-module $E$ (also called the idealization of $E$ over $A$) is the ring $A\propto E$  whose underlying group is $A\times E$ with multiplication given by $(a,e)(b,f)= (ab,af+be)$. For more details on trivial ring extensions, we refer the reader to Glaz's and Huckaba's respective books \cite{Gl,Huckaba}. We also refer D. D. Anderson and M. Winders relatively recent and comprehensive survey paper \cite{Anderson2}. These have proven to be useful in solving many open problems
and conjectures for various contexts in (commutative and non-commutative) ring theory,
see for instance \cite{MM,Anderson2,BKM,Huckaba,KM}.

The amalgamated duplication of a ring $A$ along an ideal $I$, introduced and studied by D'Anna and denoted by $A\bowtie I$, is the following subring of $A\times A$ (endowed with the usual
componentwise operations):
$$A\bowtie I=\{(a,a+i)~|~ a\in A \text{ and } i\in I\}.$$
Note that if $I^{2}=0$, then this construction $A\bowtie I$ coincides with the trivial ring extension $A\propto I$. One main difference between $A\bowtie I$ and $A\propto I$ is that the former ring can be a reduced ring, for example, it is always reduced if $A$ is an integral domain. Motivations and additional applications of the amalgamated duplication are discussed in more detail in \cite{J,D'anna1,Fontana1}.\par

In this note, we extend the ISP-domain concept to rings with zero-divisors in two different ways. Section $2$ is devoted to the study of the first class of rings in which every proper regular ideal can be factored as an invertible ideal times a nonempty product of proper radical ideals (called by us ISP-rings). Also, we investigate the stability of this property under regular localization and homomorphic image, and its transfer to various contexts of constructions such as direct product (Proposition~\ref{Pro}), trivial ring extension (Theorem~\ref{exten}) and amalgamated duplication of a ring along an ideal (Theorem~\ref{dup}).

Section $3$ deals with the study of ISP-rings which are also Marot (i.e. rings whose regular ideals are generated by regular elements, see \cite[p. 31]{Huckaba}). Among other useful results we also prove that any ISP-ring $A$ whose regular prime ideals are maximal, is an $N$-ring, that is, $A_{(M)}$ is a discrete rank one Manis valuation ring for each regular maximal ideal $M$ of $A$ (Theorem~\ref{ispN}). This result extends \cite[Corollary 1]{MT}. Here $A_{(M)}$ is the regular localization of $A$ at $M,$ that is, the fraction ring $A_S$ where $S=Reg(A)\cap (A-M)$. As a consequence of Theorem~\ref{ispN}, we prove that any regular-Noetherian ISP-ring is Dedekind, that is, a ring whose regular ideals are products of prime ideals (Corollary~\ref{rnisp}).


Section $4$ is devoted to the class of rings in which every proper ideal can be factored as an  invertible ideal times a nonempty product of proper radical ideals (called by us strongly ISP-rings). Similar as for the ISP-ring case, we investigate the stability of this property under factor with prime ideal, fraction and finite direct product ring formations (Proposition~\ref{sisp}). Further, we also study its transfer to various contexts of constructions such as trivial ring extension (Proposition~\ref{car}) and amalgamated duplication of a ring along an ideal (Theorem~\ref{dupli}). Besides this and other useful results we also prove that any strongly ISP-ring $A$ whose nonzero prime ideals are maximal, is an almost multiplication ring, that is, for every prime ideal $P$, the localization $A_P$ is a discrete rank one valuation domain or a special primary ring (Theorem~\ref{sispamr}). This is another extension of \cite[Corollary 4]{MT}. As a consequence of Theorem~\ref{sispamr}, we prove that any Noetherian strongly ISP-ring is ZPI, that is, a ring whose proper ideals are products of prime ideals (Corollary~\ref{nsisp}).


As we proceed to study the above-mentioned classes of rings, the reader may find it helpful to keep in mind the implications noted in the following figure.
\\
\begin{center}
\setlength{\unitlength}{1.5mm}
\begin{picture}(140,35)(18,-2)
\put(17,15){\textsf{Total quotient rings}}
\put(49,15){\textsf{ISP-rings}}
\put(55,10){\textsf{domains}}
\put(47.5,28){\textsf{SP-rings}}
\put(17,4){\textsf{ZPUI-rings}}
\put(40,4){\textsf{Strongly ISP-rings}}

\put(75,4){\textsf{SSP-rings}}
\put(40,-6){\textsf{von Neumann regular rings}}

\put(40,16){\vector(3,0){7}}
\put(52.5,27){\vector(0,-3){8}}
\put(30,5){\vector(3,0){7}}
\put(73.5,5){\vector(-3,0){13}}
\put(50,6){\vector(0,3){8}}
\put(55,15){\vector(0,-3){8}}
\put(52.5,-4){\vector(0,3){7}}
\end{picture}
\end{center}

\paragraph*{}
\section{ISP-rings}
We shall begin with the following definition.
\begin{defn}
$A$ is said to be an ISP-ring if every proper regular ideal of $A$ can be factored as an invertible ideal times a nonempty product of proper radical ideals.
\end{defn}
Thus a domain is an ISP-ring if and only if it is an ISP-domain. Total quotient rings and SP-rings are clearly ISP-rings.
%
%
Let $A$ be a ring and $I$ an ideal of $A$. Recall from \cite{Huckaba} that $I$ is an invertible ideal if and only if  $I$ is a finitely generated regular ideal and for each maximal ideal $M$ of $A$, $I_M$ is a principal ideal of $A_M$. We start by the following proposition in which we investigate the transfer of the ISP-ring property to finite direct product rings.

\begin{prop}\label{Pro}
Let $B$ be a finite direct product of some family of rings $(A_i)_{i=1,...,n}$. Then $B$ is an ISP-ring if and only if each $A_i$ is an ISP-ring.
\end{prop}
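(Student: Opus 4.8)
The plan is to reduce everything to componentwise statements, exploiting that every ideal $J$ of $B=\prod_{i=1}^{n}A_i$ decomposes uniquely as $J=I_1\times\cdots\times I_n$ with each $I_i$ an ideal of $A_i$, and that all relevant properties can be read off component by component. First I would record the following dictionary. An element $(a_1,\dots,a_n)$ is regular in $B$ if and only if each $a_i$ is regular in $A_i$, so $J$ is a regular ideal if and only if each $I_i$ is a regular ideal of $A_i$. Since $\sqrt{J}=\sqrt{I_1}\times\cdots\times\sqrt{I_n}$, the ideal $J$ is radical if and only if each $I_i$ is radical, and $J$ is proper if and only if $I_i\neq A_i$ for some $i$. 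Finally $J$ is invertible in $B$ if and only if each $I_i$ is invertible in $A_i$; for this I would use that the maximal ideals of $B$ are exactly $A_1\times\cdots\times M_i\times\cdots\times A_n$, that localizing there recovers $(A_i)_{M_i}$, and the finitely-generated-regular-plus-locally-principal characterization of invertibility recalled just before the statement.

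For the implication that each $A_i$ being an ISP-ring forces $B$ to be one, take a proper regular ideal $J=I_1\times\cdots\times I_n$. Each $I_i$ is then regular, and $S=\{i:I_i\neq A_i\}$ is nonempty. For $i\in S$ the ISP hypothesis on $A_i$ gives $I_i=L_iR_{i,1}\cdots R_{i,k_i}$ with $L_i$ invertible, each $R_{i,j}$ proper radical and $k_i\ge 1$; for $i\notin S$ set $L_i=A_i$. Putting $k=\max_{i\in S}k_i$, I would pad each factorization to length $k$ by inserting the radical ideal $A_i$ (note $\sqrt{A_i}=A_i$) in the empty slots, and then bundle componentwise into $L=L_1\times\cdots\times L_n$ and $Q_j=R_{1,j}\times\cdots\times R_{n,j}$. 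By the dictionary $L$ is invertible and $J=L\,Q_1\cdots Q_k$. The point needing care is that each $Q_j$ is a \emph{proper} radical ideal: choosing $i_0\in S$ with $k_{i_0}=k$, its $i_0$-component $R_{i_0,j}$ is a genuine proper factor for every $j\le k$, so each $Q_j$ has a proper component, and $k\ge 1$.

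For the converse, fix $i$ (say $i=1$) and a proper regular ideal $I_1$ of $A_1$, and consider $J=I_1\times A_2\times\cdots\times A_n$, which is proper by its first component and regular because each $A_j$ contains the regular element $1$. Applying the ISP hypothesis to $B$ gives $J=L\,Q_1\cdots Q_m$ with $L=L_1\times\cdots\times L_n$ invertible, each $Q_t=Q_{t,1}\times\cdots\times Q_{t,n}$ a proper radical ideal, and $m\ge 1$. Reading off a component $j\ge 2$ yields $A_j=L_jQ_{1,j}\cdots Q_{m,j}$; since a product of integral ideals is contained in each factor, this forces $L_j=A_j$ and $Q_{t,j}=A_j$ for all $t$. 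This is the crux: because the other components of $J$ are the whole ring, the properness of each $Q_t$ is forced entirely into the first component, so every $Q_{t,1}$ is a proper radical ideal of $A_1$. Reading off the first component then gives $I_1=L_1Q_{1,1}\cdots Q_{m,1}$ with $L_1$ invertible, each $Q_{t,1}$ proper radical and $m\ge 1$, which is precisely an ISP factorization; hence $A_1$ is an ISP-ring.

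The main obstacle to watch throughout is exactly this nonemptiness-and-properness bookkeeping: a priori a factorization in $B$ could route all radical factors through components other than the one of interest, leaving an empty or an improper radical product after projection. In the forward direction the padding-and-bundling construction handles it, and in the converse the identity $A_j=L_jQ_{1,j}\cdots Q_{m,j}$ collapsing all those factors to $A_j$ is what rescues it; the remaining verifications, namely items (a)--(d) of the dictionary above, are routine.
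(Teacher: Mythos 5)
Your proof is correct and follows essentially the same componentwise strategy as the paper (decompose every ideal of $B$ as a product of ideals of the $A_i$ and transfer regularity, radicality, properness and invertibility factor by factor). In fact you are more careful than the paper's own two-line argument: the padding of factorizations to a common length (needed because the components may have radical products of different lengths, or some component $I_i$ may equal $A_i$), and the observation in the converse that properness of each $Q_t$ is forced into the distinguished component, are exactly the bookkeeping points the published proof leaves implicit.
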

\proof The proof is done by induction on $n$ and it suffices to check it for $n=2$. Assume that $A_1$ and $A_2$ are ISP-rings. If $I$ is a proper regular ideal of $B=A_1\times A_2$, then $I=I_1\times I_2$, where $I_1$ and $I_2$ are regular ideals of $A_1$ and $A_2$ respectively. By assumption each $I_i$ can be factored as an invertible ideal times a nonempty product of proper radical ideals. On the other hand, it is well known that the direct product of invertible ideals is invertible. Hence $I$ can also be factored as an invertible ideal times a nonempty product of proper radical ideals. Therefore, $B$ is an ISP-ring. Conversely, assume that $B$ is an ISP-ring. Let $I$ (resp. $J$) be a proper regular ideal of $A_1$ (resp. $A_2$). Then $I\times A_2$ (resp. $A_1\times J$) can be factored as an invertible ideal times a nonempty product of proper radical ideals of $A_1\times A_2$, as desired.\qed\\

The following proposition examines the transfer of the ISP-ring property to homomorphic images.
\begin{prop}\label{p1}
Let $f:A\longrightarrow B$ be a surjective ring homomorphism such that $Ker(f)$ is a prime ideal of $A$.
Assume that every regular ideal of $B$ can be lifted via $f$ to a regular ideal of $A$. If $A$ is an ISP-ring, then so is $B$.
\end{prop}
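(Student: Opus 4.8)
Since $\Ker f$ is a prime ideal $P$, the surjection $f$ identifies $B$ with the domain $A/P$; in particular every nonzero ideal of $B$ is regular and, conversely, every regular ideal of $B$ is nonzero. The plan is to start from an arbitrary proper regular ideal $J$ of $B$, lift it to a proper regular ideal of $A$, apply the ISP-factorization available in $A$, and push the factorization forward along $f$, checking that each piece survives. First I would produce the lift: by hypothesis there is a regular ideal $I_0$ of $A$ with $f(I_0)=J$, and replacing $I_0$ by $I:=I_0+P=f^{-1}(J)$ keeps it regular (it still contains a regular element) while forcing $P\subseteq I$. Since $f(I)=J\neq B$, the ideal $I$ is proper, so $I$ is a proper regular ideal of $A$.

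Because $A$ is an ISP-ring, I can write $I=L\,R_1\cdots R_k$ with $L$ invertible, each $R_i$ a proper radical ideal, and $k\geq 1$. Applying $f$ and using that a ring homomorphism carries a product of ideals to the product of their images gives $J=f(I)=f(L)\,f(R_1)\cdots f(R_k)$, a candidate factorization with $k\geq 1$ factors. The crucial structural observation is that the product of ideals is contained in each factor, so $P\subseteq I\subseteq R_i$ for every $i$; thus all the radical factors already contain the kernel.

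It then remains to verify the two types of factor. For the radical pieces, since $P\subseteq R_i$ with $R_i$ radical and proper, the image $f(R_i)=R_i/P$ is again radical and proper in $A/P$, radicalness passing to the quotient precisely because the ideal contains $P$. For the invertible piece, $f(L)$ is finitely generated, and localizing at a maximal ideal $N=M/P$ of $B$ (with $M\supseteq P$ maximal in $A$) shows that $f(L)_N$ is the image of the principal ideal $L_M$ under the induced homomorphism $A_M\to B_N$, hence principal; moreover $f(L)\neq 0$ because $B$ is a domain and the product $J=f(L)\,f(R_1)\cdots f(R_k)$ is nonzero, so $f(L)$ is regular. By the recalled characterization of invertible ideals, $f(L)$ is then invertible in $B$. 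Hence $J$ is an invertible ideal times a nonempty product of proper radical ideals, and $B$ is an ISP-ring.

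I expect the two potential obstacles to be exactly the preservation of radicalness and of invertibility under $f$, both of which fail for a general surjection. Here they are rescued by the two special features of the present situation --- that the radical factors are trapped between $P$ and $I$ and therefore contain the kernel, and that $B=A/P$ is a domain, so that the nonzero ideal $f(L)$ is automatically regular. The lifting hypothesis is what guarantees the starting ideal $I$ can be taken regular; without it one could not invoke the ISP property of $A$.
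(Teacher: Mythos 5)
Your proof is correct and follows the same route as the paper's: lift the ideal to $f^{-1}(J)$ (regular by the lifting hypothesis), factor it in $A$, and push the factorization forward, using that every factor contains $\Ker(f)$ to preserve radicalness and properness, and that $B=A/P$ is a domain to get invertibility of $f(L)$. You simply supply more detail than the paper does on why $f(L)$ is invertible (finitely generated, nonzero in a domain, locally principal) and on normalizing the lift to $f^{-1}(J)$.
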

\proof Let $I$ be a proper regular ideal of $B$. By our assumption, $f^{-1}(I)$ is a proper regular ideal of $A$. As $A$ is an ISP-ring, we can write 
$f^{-1}(I)=JH_1\cdots H_n$ with $J$ an invertible ideal, $n\geq 1$ and all $H_i$'s are proper radical ideals. Then $I=f(J)f(H_1)\cdots f(H_n)$, where $f(J)$ is an invertible ideal and each $f(H_i)$ a proper radical ideal of $B$ since $J$ and each $H_i$ contains $Ker(f)$ properly. This completes the proof of the Proposition.\qed

\begin{rem}
Note that the condition ``$Ker(f)$ is a prime ideal" is necessary. Indeed, let $(A,M)$ be a non-Pr\"ufer local domain, $K=qf(A)$ and  $R=A\propto K$. By Corollary \ref{triv} and \cite[Theorem 5]{MT}, $R$ is not an ISP-ring. Now, let $E$ be a nonzero vector space over $\frac{R}{M\propto K}$ and $T=R\propto E$. Clearly, $T$ is an ISP-ring, but $\frac{T}{0\propto E}(\simeq R)$ is not an ISP-ring.
\end{rem}



Let $P$ be a prime ideal of a ring $A$. Denote by $A_{(P)}=\{a/b\in T(A)~|~a\in A, b\in A-P \text{ and } b \text{ is regular} \}$ the regular localization of $A$ at $P$. Here $T(A)$ denotes the total quotient ring of $A$.
\begin{prop}\label{Loca}
Let $A$ be an ISP-ring and $S$ a multiplicatively closed set which consists only of regular elements. Then $A_S$ is an ISP-ring (in particular, $A_{(P)}$ is an ISP-ring, where $P$ is a prime ideal of $A$).
\end{prop}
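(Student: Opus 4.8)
The plan is to run the factorization through the contraction--extension correspondence attached to the localization map $\varphi\colon A\to A_S$, $a\mapsto a/1$. Let $\mathfrak{I}$ be a proper regular ideal of $A_S$ and put $I:=\varphi^{-1}(\mathfrak{I})=\{a\in A : a/1\in\mathfrak{I}\}$. First I would verify that $I$ is a proper regular ideal of $A$ with $\mathfrak{I}=IA_S$. Properness is immediate: if $s\in I\cap S$ then $s/1\in\mathfrak{I}$ is a unit, forcing $\mathfrak{I}=A_S$; so $I\cap S=\emptyset$ and in particular $1\notin I$. For regularity I use that $S$ consists of regular elements. Pick a regular $u=a/s\in\mathfrak{I}$; then $a/1=(s/1)u\in\mathfrak{I}$ is regular in $A_S$, so $a\in I$, and $a$ is regular in $A$ because any relation $ab=0$ gives $(a/1)(b/1)=0$, hence $b/1=0$, hence $tb=0$ for some $t\in S$, hence $b=0$ as $t$ is regular. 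Finally $\mathfrak{I}=IA_S$ is the standard fact that every ideal of a localization is the extension of its contraction.

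Next I would apply the ISP hypothesis to $I$, writing $I=JH_1\cdots H_n$ with $J$ invertible, $n\ge 1$, and each $H_i$ a proper radical ideal of $A$. Extending to $A_S$ yields $\mathfrak{I}=(JA_S)(H_1A_S)\cdots(H_nA_S)$. Here $JA_S$ is invertible, since $(JA_S)(J^{-1}A_S)=(JJ^{-1})A_S=A_S$ and localization preserves finite generation and regularity; and each $H_iA_S$ is radical because localization commutes with the nilradical, so $\sqrt{H_iA_S}=(\sqrt{H_i})A_S=H_iA_S$. A factor $H_iA_S$ is proper exactly when $H_i\cap S=\emptyset$, whereas if $H_i\cap S\neq\emptyset$ then $H_iA_S=A_S$ and that factor drops out. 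Discarding the trivial factors gives $\mathfrak{I}=(JA_S)\prod_{H_i\cap S=\emptyset}(H_iA_S)$, which is exactly the required shape \emph{provided the remaining product is nonempty}.

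The main obstacle is precisely this nonemptiness, that is, ruling out or repairing the degenerate case in which every $H_i$ meets $S$, so that the extended factorization collapses to the bare invertible ideal $\mathfrak{I}=JA_S$ with an empty radical part. This really can occur at the level of the naive computation: if $P$ is a minimal prime of $I$ disjoint from $S$ (one exists because $I$ is $S$-saturated and proper), then $P$ contains the product $JH_1\cdots H_n$, and nothing prevents $P$ from containing only the invertible factor $J$ while each $H_i$ slips outside $P$ and meets $S$. In that event one must refactor the proper invertible ideal $\mathfrak{I}=JA_S$ \emph{inside} $A_S$ and produce a genuine proper radical factor there; I expect this, rather than the routine contraction--extension bookkeeping of the first two steps, to be the crux of the proof. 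The natural way to carry it out is to invoke the structure theory of ISP-rings extending \cite{MT}: an ISP-ring is strongly discrete Pr\"ufer on its regular part, which rules out idempotent regular primes and lets one peel a proper radical ideal $R\supseteq\mathfrak{I}$ off $\mathfrak{I}$ so that $\mathfrak{I}=(\mathfrak{I}R^{-1})R$ with $\mathfrak{I}R^{-1}$ an integral invertible ideal; one must be careful that $R$ is not the full radical $\sqrt{\mathfrak{I}}$ in general, but a radical ideal adapted to the local structure. Once the degenerate case is settled this way, the statement follows, and the parenthetical claim about $A_{(P)}$ is the special case $S=Reg(A)\cap(A\setminus P)$, which is a multiplicatively closed set of regular elements.
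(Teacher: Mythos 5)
Your first two steps (contracting $\mathfrak{I}$ to a proper regular ideal $I=\varphi^{-1}(\mathfrak{I})$ of $A$, factoring $I=JH_1\cdots H_n$, and extending back to $A_S$) coincide with the paper's argument and are fine. The gap is in your third step. You correctly isolate the one delicate point --- that the radical part of the extended factorization might become empty --- but you then assert that this degeneration ``really can occur'' and propose to repair it by refactoring $\mathfrak{I}=JA_S$ inside $A_S$ using structure theory (strongly discrete Pr\"ufer behaviour, peeling off ``a radical ideal adapted to the local structure''). That repair is not an argument: the strongly discrete Pr\"ufer property is proved in \cite{MT} only for ISP-\emph{domains} and is not available here, and you never say which radical ideal you peel off or why the quotient stays integral. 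As written, the proof is incomplete exactly at the point you yourself identify as the crux.

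In fact the degenerate case is vacuous, and a two-line cancellation closes it. Suppose every $H_i$ meets $S$, so that $\mathfrak{I}=JA_S$. Then
$$J\subseteq \varphi^{-1}(JA_S)=\varphi^{-1}(\mathfrak{I})=I=JH_1\cdots H_n\subseteq J,$$
hence $J=JH_1\cdots H_n$; multiplying by $J^{-1}$ gives $A=H_1\cdots H_n\subseteq H_1$, contradicting that each $H_i$ is proper. So at least one $H_i$ is disjoint from $S$, the product $\prod_{H_i\cap S=\emptyset}H_iA_S$ is nonempty, and no refactorization inside $A_S$ is needed. (This is exactly how the paper finishes; your minimal-prime scenario, in which $J$ alone sits inside the surviving prime while all the $H_i$ escape into $S$, cannot actually arise.)
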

\proof Note that if $I$ is a proper regular ideal of $A_S$, then $Q=I\cap A$ is a proper regular ideal of $A$. By assumption, $Q=JH_1\cdots H_n$ where $J$ is an invertible ideal, $n\geq 1$ and all $H_i$'s are proper radical ideals. Hence $I=QA_S=(JA_S)(H_1A_S)\cdots (H_nA_S)$ where $JA_S$ is an invertible ideal and each $H_iA_S$ a radical ideal. To finish the argument, we claim that $H_i\cap S=\emptyset$ for at least one $i$. If not, then $I=JA_S$. Therefore, $J\subseteq JA_S\cap A=I\cap A=Q\subseteq J$ and hence $J=JH_1\cdots H_n$, a contradiction since $J$ is invertible. 

\qed

In general, localization and factor ring of an ISP-ring need not to be ISP-rings as shown in the following example.
\begin{exam}{\cite[Remark 2.3]{SP}}
Let $A=F[[x,y,z,v]]/(x^{2},xy,xz,xv)$, where $F$ is a field. Then $A$ is an ISP-ring but $A_{(x,y,z)A}$ and $A/xA$ are not. Indeed, clearly $A$ is a total quotient ring and hence $A$ is an ISP-ring. But $A_{(x,y,z)A}\simeq F[[y,z,v]]_{(y,z)}$ and $A/xA\simeq F[[y,z,v]]$ are not  ISP-rings, because both rings have dimension greater than one and Noetherian ISP-domains are Dedekind, cf. \cite[Corollary 4]{MT}.
\end{exam}


Our next result studies the possible transfer of the ISP-ring property between a ring $A$ and the trivial ring extension $A\propto E$. Set $S=A-(Z(A)\cup Z(E))$.
\begin{thm}\label{exten}
Let $A$ be a ring and $E$ an $A$-module such that $E=sE$ for every $s\in S$. Then $A\propto E$ is an ISP-ring if and only if every proper ideal of $A$ not disjoint to $S$ can be factored as an invertible ideal times a nonempty product of proper radical ideals.
\end{thm}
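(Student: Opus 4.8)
The plan is to transfer the entire question to the ideal theory of $A$ via the assignment $I \mapsto I \propto E$, after isolating the two structural features of $R := A \propto E$ that make this assignment faithful. The regular elements of $R$ are exactly the pairs $(s,e)$ with $s \in S$; indeed if $s \in S$ then $s b = 0$ and $sf + be = 0$ force $b = 0$ and then $f = 0$, while if $a \in Z(A)$ one has $(a,e)(0,be) = (0,abe) = 0$ (and $(a,e)(b,0) = (0,be)$ when $be = 0$), and if $a \in Z(E)$ then $(a,e)(0,f) = 0$ for a suitable $f \neq 0$. Now I would record two containment facts. First, $(0 \propto E)^2 = 0$, so $0 \propto E$ lies in the nilradical and hence inside every radical ideal of $R$. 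Second, if $\mathfrak{I}$ is a \emph{regular} ideal of $R$ it contains a regular element $(s,e)$ with $s \in S$, and the hypothesis $E = sE$ lets me write every $(0,f)$ as $(s,e)(0,f')$ with $f = sf'$, so $0 \propto E \subseteq \mathfrak{I}$. Consequently every regular ideal and every radical ideal of $R$ has the form $I \propto E$ for a unique ideal $I$ of $A$ (its image under $R \to A$). Since $R/(I \propto E) \cong A/I$, the ideal $I \propto E$ is proper (resp.\ radical) exactly when $I$ is, and $I \propto E$ is regular exactly when $I \cap S \neq \emptyset$. Thus $I \mapsto I \propto E$ is an inclusion- and properness-preserving bijection from the ideals of $A$ meeting $S$ onto the regular ideals of $R$, carrying radical ideals to radical ideals.

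Next I would establish the two multiplicative ingredients. A direct computation, again invoking $E = sE$ through a factor meeting $S$, gives the product rule $(I_1 \propto E)\cdots(I_r \propto E) = (I_1 \cdots I_r) \propto E$ whenever at least one $I_j$ meets $S$ (the $A$-component is $I_1 \cdots I_r$, and $0 \propto E$ is absorbed because $(s,0)(0,f') = (0,sf')$ sweeps out $0 \propto sE = 0 \propto E$). The heart of the matter, and the step I expect to be the main obstacle, is the invertibility correspondence: for $J$ with $J \cap S \neq \emptyset$, the ideal $J \propto E$ is invertible in $R$ if and only if $J$ is invertible in $A$. I would prove this through the characterization recalled before Proposition~\ref{Pro}, that an ideal is invertible iff it is finitely generated, regular, and locally principal. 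Regularity transfers by the remarks above. For finite generation one uses $R\cdot(s,0) = sA \propto E \supseteq 0 \propto E$ for $s \in S$ (where $E = sE$ is essential), so $J \propto E$ is generated by $(s,0)$ together with lifts of a finite generating set of $J$. For local principality one passes to $R_{M \propto E} \cong A_M \propto E_M$ and shows $J_M \propto E_M = (c,0)R_{M\propto E}$ when $J_M = cA_M$; here the identity $(c,0)R_{M\propto E} = cA_M \propto cE_M$ combined with $cE_M = E_M$ (which follows from $sE_M = E_M$ and $s \in cA_M$) closes the argument. The converse implication is read off from the same local description together with surjectivity of $R \to A$.

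With these tools in hand the equivalence follows formally. If $R$ is an ISP-ring and $I$ is a proper ideal of $A$ with $I \cap S \neq \emptyset$, then $I \propto E$ is a proper regular ideal of $R$, hence factors as $\mathfrak{J}H_1\cdots H_n$ with $\mathfrak{J}$ invertible, the $H_i$ proper radical, and $n \geq 1$; writing $\mathfrak{J} = J \propto E$ and $H_i = H_i' \propto E$, the invertibility correspondence makes $J$ invertible in $A$, each $H_i'$ is proper radical, and the product rule together with injectivity of $I \mapsto I \propto E$ yields $I = J H_1' \cdots H_n'$. Conversely, given a factorization $I = J H_1' \cdots H_n'$ of a proper ideal $I$ meeting $S$, one has $J \cap S \neq \emptyset$ (since $I \subseteq J$), so $J \propto E$ is invertible in $R$, each $H_i' \propto E$ is proper radical, and the product rule returns the factorization $I \propto E = (J \propto E)(H_1' \propto E)\cdots(H_n' \propto E)$ in $R$. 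As every proper regular ideal of $R$ is of the form $I \propto E$ with $I$ proper and meeting $S$, this shows $R$ is an ISP-ring, completing both implications.
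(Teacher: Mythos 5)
Your proof is correct and follows essentially the same route as the paper: both reduce the statement to the correspondence $I \mapsto I \propto E$ between proper ideals of $A$ meeting $S$ and proper regular ideals of $A \propto E$, together with the transfer of radicality, invertibility and products across this correspondence --- the paper simply cites \cite{Anderson2} for the structural facts about homogeneous ideals that you verify by hand. One minor caveat: your product rule $(I_1\propto E)\cdots(I_r\propto E)=(I_1\cdots I_r)\propto E$ can fail for $r\ge 3$ if only one $I_j$ meets $S$ (the $E$-component of the product is $\sum_j \bigl(\prod_{k\ne j} I_k\bigr)E$), but in every instance where you invoke it all factors contain an ideal meeting $S$, so the argument is unaffected.
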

\proof
Assume that $A\propto E$ is an ISP-ring. Let $I$ be a proper ideal of $A$ such that $I\cap S\neq \emptyset$. Then $I\propto E$ is a proper regular ideal of $A\propto E$ and hence can be factored as an invertible ideal times a nonempty product of proper radical ideals.
By \cite[Theorem 3.9]{Anderson2} and \cite[Theorem 3.2(3)]{Anderson2}, $I\propto E=(J\propto E)(H_1\propto E)\cdots (H_n\propto E)$ with $J$ an invertible ideal of $A$, $n\geq 1$ and all $H_i$'s are proper radical ideals of $A$ . We get $I=JH_1\cdots H_n$. Conversely, let $L$ be a proper regular ideal of $A\propto E$. By \cite[Theorem 3.9]{Anderson2}, $L=I\propto E$ where $I$ is a proper regular ideal of $A$ such that $I\cap S\neq \emptyset$ since $E=sE$ for every $s\in S$. By our assumption, we can write $I=\widetilde{J}\widetilde{H}_1\cdots \widetilde{H}_n$ with $\widetilde{J}$ an invertible ideal of $A$,  $n\geq 1$ and all $\widetilde{H}_i$'s are proper radical ideals of $A$. Then $L=I\propto E=(\widetilde{J}\propto E)(\widetilde{H}_1\propto E)\cdots (\widetilde{H}_n\propto E)$, where $\widetilde{J}\propto E$ is invertible and each $\widetilde{H}_i\propto E$ (by definition) a proper radical ideal. Indeed, since $\widetilde{J}$ is an invertible ideal of $A$ and $\widetilde{J}\cap S\neq \emptyset$, it is easily verified that $\widetilde{J}\propto E$ is regular, finitely generated and locally principal, as desired.

\qed  

Theorem \ref{exten} specializes to the following result.

\begin{cor}\label{triv}
Let $A$ be an integral domain  with quotient field $K$, $E$ a $K$-vector space,
and $R=A\propto E$. Then $R$ is an ISP-ring if and only if $A$ is an ISP-domain.
\end{cor}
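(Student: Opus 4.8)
The plan is to derive this corollary directly from Theorem~\ref{exten} by specializing the set $S$ and verifying the standing module hypothesis $E=sE$. I would first dispose of the degenerate case $E=0$: there $R=A\propto 0\cong A$, and the claim reduces to the observation, recorded after the definition, that a domain is an ISP-ring if and only if it is an ISP-domain. So assume $E\neq 0$ and compute the relevant zero-divisor sets. Since $A$ is a domain, $Z(A)=\{0\}$. For $Z(E)$, the key fact is that $E$ is a $K$-vector space, so every nonzero $a\in A$ is a unit of $K$ and multiplication by $a$ is a $K$-linear bijection of $E$; hence no nonzero element of $A$ is a zero-divisor on $E$, giving $Z(E)=\{0\}$. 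Consequently $S=A-(Z(A)\cup Z(E))=A\setminus\{0\}$.

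Next I would check the hypothesis of Theorem~\ref{exten}, namely $E=sE$ for every $s\in S$. This follows from the same observation: for nonzero $s\in A$, multiplication by $s$ is a bijection of the $K$-vector space $E$, whence $sE=E$. Thus Theorem~\ref{exten} applies verbatim to $R=A\propto E$.

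It then remains only to translate the two sides of the resulting equivalence into the language of the corollary. A proper ideal $I$ of $A$ satisfies $I\cap S\neq\emptyset$ exactly when $I$ contains a nonzero element, i.e. exactly when $I$ is a nonzero proper ideal; the zero ideal is the unique proper ideal disjoint from $S$. Hence the right-hand condition of Theorem~\ref{exten} reads: every nonzero proper ideal of $A$ can be factored as an invertible ideal times a nonempty product of proper radical ideals. Since in a domain the regular ideals are precisely the nonzero ideals, this is exactly the statement that $A$ is an ISP-ring, equivalently that $A$ is an ISP-domain. Combining, $R=A\propto E$ is an ISP-ring if and only if $A$ is an ISP-domain.

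I expect no genuine obstacle here: the corollary is essentially a bookkeeping specialization of Theorem~\ref{exten}. The only substantive points are the identification $S=A\setminus\{0\}$ and the confirmation of $sE=E$, both of which rest on the single fact that nonzero scalars from $A$ act invertibly on a $K$-vector space; the sole thing to watch is to treat the degenerate case $E=0$ separately at the outset so that the computation of $Z(E)$ is unambiguous.
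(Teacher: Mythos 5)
Your proof is correct and follows exactly the route the paper intends: the paper presents Corollary~\ref{triv} as an immediate specialization of Theorem~\ref{exten} without writing out the details, and your verification that $S=A\setminus\{0\}$, that $sE=E$ for nonzero $s$, and that the factorization condition on nonzero proper ideals is precisely the ISP-domain property is the bookkeeping the authors leave implicit. The separate treatment of $E=0$ is a reasonable extra precaution not present in the paper.
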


\begin{exam}\label{exam}
Let $R=\Z\propto\Q$ be a trivial ring extension of $\Z$ by $\Z$-module $\Q$. By Corollary \ref{triv}, $R$ is an ISP-ring.
\end{exam}

Using \cite[Proposition 17]{MT}, we can construct nontrivial examples of ISP-rings.
\begin{exam}
Let $C$ be an SP-domain but not Dedekind, $M=qC$ a maximal principal ideal of $C$ and $D$ a discrete rank one valuation domain with quotient field $C/M$. Assume there exists a unit $p$ of $C$ such that $\phi(p)$ generates the maximal ideal of $D$, where $\phi: C\longrightarrow C/M$ is the canonical map. So, $A=\phi^{-1}(D)$ is an ISP-domain and let $K=qf(A)$. By Corollary \ref{triv}, $R=A\propto K$ is an ISP-ring.
\end{exam}

\begin{exam}
Let $A$ be a ring and $\Omega$ the set of proper regular ideals of $A$ which cannot be factored as an invertible ideal times a nonempty product of proper radical ideals. For each $I\in \Omega$ choose a maximal ideal $M_I$ containing $I$ and set $E=\oplus_{I\in \Omega}A/{M_I}$. Then $A\propto E$ is an ISP-ring. Indeed, set $S=A-(Z(A)\cup Z(E))$ and note that $Z(E)=\cup_{I\in \Omega}M_I$. If $s\in S$, then $s(A/{M_I})=A/{M_I}$ for every $I\in \Omega$, so $sE=E$. Now we apply Theorem \ref{exten} to get the required result.
\end{exam}


Now, we present our main result about the transfer of the ISP-ring property to amalgamated duplication of a ring along an ideal. To this end we denote by $Reg(A)$ the set of regular elements of $A$.
\begin{thm}\label{dup}
Let $A$ be a ring and $I$ an ideal of $A$. Then:
\begin{itemize}
\item[(1)] If $A\bowtie I$ is an ISP-ring then so is $A$.
\item[(2)] Assume that $I=aI$ for each $a\in Reg(A)$. Then $A\bowtie I$ is an ISP-ring if and only if so is $A$.
\end{itemize}
\end{thm}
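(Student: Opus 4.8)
The plan is to exploit the canonical surjection $\pi_1:A\bowtie I\to A$, $(a,a+i)\mapsto a$, whose kernel is $K:=0\bowtie I=\{(0,i)\mid i\in I\}$, together with the elementary observation that a product of ideals is contained in each of its factors.

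For (1), assume $A\bowtie I$ is an ISP-ring and let $J$ be a proper regular ideal of $A$, say with $r\in J$ regular. Then $\pi_1^{-1}(J)=J\bowtie I$ is a proper ideal of $A\bowtie I$ which is regular, since it contains $(r,r)$ and $(r,r)$ is a non-zero-divisor of $A\bowtie I$ (both coordinates are regular in $A$). Writing $\pi_1^{-1}(J)=\mathcal J\,\mathcal H_1\cdots\mathcal H_n$ with $\mathcal J$ invertible, $n\ge 1$ and each $\mathcal H_k$ proper radical, I would observe that since the product is contained in each factor, every factor contains $\pi_1^{-1}(J)$, and hence contains $K=\ker\pi_1$. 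Applying $\pi_1$ then gives $J=\pi_1(\mathcal J)\,\pi_1(\mathcal H_1)\cdots\pi_1(\mathcal H_n)$, and because each factor contains $K$ the bookkeeping of the proof of Proposition~\ref{p1} goes through verbatim: $\pi_1(\mathcal H_k)$ is a proper radical ideal of $A\cong (A\bowtie I)/K$, while $\pi_1(\mathcal J)$ is finitely generated, locally principal, and regular (it contains $r$), hence invertible. Thus $A$ is an ISP-ring, with no hypothesis on $I$.

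For (2) the forward implication is (1); for the converse I first need a description of the regular ideals of $A\bowtie I$. The key lemma is that $(s,t)\in A\bowtie I$ is regular if and only if both $s$ and $t$ are regular in $A$. The nontrivial direction is that $t\in Z(A)$ forces $(s,t)\in Z(A\bowtie I)$: choosing $e\neq 0$ with $te=0$ and writing $t=s+i$ with $i\in I$, the element $v:=se=-ie$ lies in $I$; if $v\neq 0$ then $(0,v)$ is a nonzero annihilator of $(s,t)$, and if $v=0$ then $(e,e)$ is, so $(s,t)$ is a zero-divisor (the case $s\in Z(A)$ follows by applying the swap automorphism of $A\bowtie I$). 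Granting this, let $\mathcal L$ be a proper regular ideal of $A\bowtie I$ and pick a regular $(s,t)\in\mathcal L$; then $t\in Reg(A)$, so the hypothesis $I=tI$ yields $(s,t)\,(0\bowtie I)=0\bowtie tI=K$, whence $K\subseteq\mathcal L$. Consequently $\mathcal L=\pi_1^{-1}(\fa)=\fa\bowtie I$, where $\fa:=\pi_1(\mathcal L)$ is a proper regular ideal of $A$.

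Finally I would use that $A$ is an ISP-ring to write $\fa=JH_1\cdots H_n$ ($J$ invertible, $n\ge 1$, each $H_k$ proper radical) and lift this factorization through $(-)\bowtie I$. Each $H_k\bowtie I=\pi_1^{-1}(H_k)$ is a proper radical ideal of $A\bowtie I$ because $(A\bowtie I)/\pi_1^{-1}(H_k)\cong A/H_k$ is reduced, and $J\bowtie I$ is invertible: it contains the regular element $(j_0,j_0)$ for $j_0\in J$ regular, it is locally principal because $J$ is, and it is finitely generated because $I=j_0I$ forces $K\subseteq (j_0,j_0)(A\bowtie I)$, so the lifts of a finite generating set of $J$ already generate $J\bowtie I$. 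The main obstacle is the remaining identity $\mathcal L=\fa\bowtie I=(J\bowtie I)(H_1\bowtie I)\cdots(H_n\bowtie I)$: the inclusion $\subseteq$ is automatic, while for $\supseteq$ one must check that $K$ is absorbed into the product. This again rests on $I=aI$, which makes the regular factor $J\bowtie I$ reproduce all of $0\bowtie I$, and is most cleanly verified after localizing at the maximal ideals of $A\bowtie I$, separating those that contain $K$ from those that do not.
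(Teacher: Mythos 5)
Your argument is correct and follows essentially the same route as the paper: part (1) pushes the factorization of $J\bowtie I$ down through $\pi_1$ after noting every factor contains $0\bowtie I$, and part (2) rests on the same three facts the paper isolates as Lemmas~\ref{regu}, \ref{inver} and \ref{fin} (regular ideals of $A\bowtie I$ have the form $Q\bowtie I$ when $I=aI$; invertibility descends along $\pi_1$; $J\bowtie I$ is finitely generated and locally principal), which you prove inline. Your closing remark that the identity $(JH_1\cdots H_n)\bowtie I=(J\bowtie I)(H_1\bowtie I)\cdots(H_n\bowtie I)$ reduces to absorbing $0\bowtie I$ into the product via $I=aI$ is exactly the point the paper also leaves implicit, so nothing essential is missing.
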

The next lemmas prepare the way.
\begin{lem}\label{regu}
Let $A$ be a ring and $I$ an ideal of $A$. Then the following statements are equivalent:
\begin{itemize}
\item[(1)] Every regular ideal of $A\bowtie I$ has the form $H\bowtie I$, where $H$ is a regular ideal of $A$.
\item[(2)] $I=aI$ for every $a\in Reg(A)$.
\end{itemize}
\end{lem}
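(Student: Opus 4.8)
The plan is to reduce everything to a description of the regular elements of $A\bowtie I$ and then read off both implications by projecting onto the first coordinate. Throughout I would write $p\colon A\bowtie I\to A$, $(a,a+i)\mapsto a$, for the first projection, and observe that $\{0\}\times I=\{(0,i):i\in I\}$ and the diagonal $\{(c,c):c\in A\}$ both lie in $A\bowtie I$.

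The key technical point — and the step I expect to be the main obstacle — is the following description: an element $(a,a+i)$ is regular in $A\bowtie I$ if and only if both $a$ and $a+i$ are regular in $A$. One implication is immediate, since $A\bowtie I\subseteq A\times A$ and an element regular in $A\times A$ is regular in any subring containing it. For the converse I would argue by contradiction: suppose $ac=0$ with $c\neq 0$, and set $b=a+i$. Then $bc=ic\in I$ and $a(bc)=(ac)b=0$, so $bc$ lies in $I$ and annihilates $a$. If $bc\neq 0$, then $(bc,0)$ is a nonzero element of $A\bowtie I$ (its coordinates differ by $-bc\in I$) killing $(a,b)$; if $bc=0$, then the nonzero diagonal element $(c,c)$ kills $(a,b)$. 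In either case $(a,b)$ is a zero-divisor, so regularity of $(a,b)$ forces $a\in Reg(A)$. Applying the same conclusion to the swap automorphism $(x,y)\mapsto(y,x)$ of $A\bowtie I$ (which sends $(a,b)$ to $(b,a)$, again an element of $A\bowtie I$ since $a-b=-i\in I$) yields $a+i\in Reg(A)$ as well.

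Granting this, I would prove $(2)\Rightarrow(1)$ as follows. Let $L$ be a regular ideal of $A\bowtie I$ and pick a regular element $(a,a')\in L$; by the description $a,a'\in Reg(A)$ and $a'-a\in I$. Put $H=p(L)$, an ideal of $A$ containing the regular element $a$, hence a regular ideal. The inclusion $L\subseteq H\bowtie I$ is automatic. For the reverse inclusion I first show $\{0\}\times I\subseteq L$: for $j\in I$ we have $(a,a')(0,j)=(0,a'j)$, and as $j$ ranges over $I$ the second coordinate ranges over $a'I=I$ by hypothesis $(2)$, since $a'\in Reg(A)$. Now given $(h,h+i)\in H\bowtie I$, choose $i'\in I$ with $(h,h+i')\in L$ (possible since $h\in H=p(L)$); then $(h,h+i)=(h,h+i')+(0,i-i')\in L$. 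Hence $L=H\bowtie I$ with $H$ regular.

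Finally, for $(1)\Rightarrow(2)$, fix $a\in Reg(A)$. The diagonal element $(a,a)$ is regular by the description, so the principal ideal $L=(a,a)(A\bowtie I)=\{(ac,ac+aj):c\in A,\ j\in I\}$ is a regular ideal, and by $(1)$ I may write $L=H\bowtie I$ for some (regular) ideal $H$ of $A$. Comparing the two presentations of $L$ on the elements whose first coordinate vanishes settles the claim: in $\{(ac,ac+aj)\}$ the first coordinate $ac$ is zero only when $c=0$ (as $a\in Reg(A)$), leaving exactly $\{0\}\times aI$; in $H\bowtie I$ the elements with zero first coordinate are exactly $\{0\}\times I$. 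Therefore $aI=I$, as required. The only delicate ingredient is the regular-element description above; once it is in place, both directions are bookkeeping with the first projection and the hypothesis $aI=I$.
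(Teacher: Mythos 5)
Your proof is correct and follows essentially the same route as the paper: for $(1)\Rightarrow(2)$ both arguments extract $I=aI$ from the principal ideal generated by the diagonal element $(a,a)$, and for $(2)\Rightarrow(1)$ both show $0\times I\subseteq L$ by writing $(0,h)$ as a multiple of a regular element $(a,a+i)\in L$ using divisibility of $I$ by $a+i$. The only difference is that you supply a self-contained proof of the characterization of regular elements of $A\bowtie I$ (that $(a,a+i)$ is regular iff $a$ and $a+i$ are regular in $A$), which the paper simply quotes as the known formula for $Z(A\bowtie I)$; your case analysis there is sound.
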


\proof
$(1)\Rightarrow (2)$. Let $i\in I$ and $a\in Reg(A)$. Clearly, $(a,a)$ is a regular element of $A\bowtie I$. By  hypothesis, the ideal generated by $(a,a)$ contains $0\times I$ and hence $(0,i)=(a,a)(0,j)=(0,aj)$ for some $j\in I$. Therefore, $i=aj$, as desired.\\
$(2)\Rightarrow (1)$. Let $J$ be a regular ideal of $A\bowtie I$ and $(a,a+i)$ a regular element of $J$. It follows from the general formula of $Z(A\bowtie I$) that $a$ and $a+i$ are regular elements of $A$. We aim to prove that $0\times I\subseteq J$. If $h\in I$, then  there exists $j\in I$ such that $h=(a+i)j$. Then $(0,h)=(0,j)(a,a+i)\in J$ and therefore $J$ has the form $H\bowtie I$ where $H$ is a regular ideal of $A$. This completes the proof.

\qed

\begin{lem}\label{inver}
Let $A$ be a ring and $I$, $J$ two ideals of $A$. If $J\bowtie I$ is invertible, then so is $J$.
\end{lem}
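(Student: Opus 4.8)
The plan is to transfer invertibility through the first-coordinate projection
$$\pi: A\bowtie I \longrightarrow A, \qquad (a,a+i)\mapsto a,$$
which is a surjective ring homomorphism carrying the ideal $J\bowtie I$ onto $J$. The guiding idea is that invertibility of $J\bowtie I$ amounts to the relation $(J\bowtie I)(J\bowtie I)^{-1}=A\bowtie I$ in the total quotient ring $T(A\bowtie I)$, and that $\pi$ should descend to a ring homomorphism $T(A\bowtie I)\to T(A)$ under which this relation projects to $JJ^{-1}=A$.

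First I would record the elementary facts about $\pi$: it is onto, $\pi(J\bowtie I)=J$, and, crucially, it sends regular elements to regular elements. The last point is exactly the computation already used in the proof of Lemma~\ref{regu}: by the formula for $Z(A\bowtie I)$, if $(a,a+i)$ is regular in $A\bowtie I$ then $a$ is regular in $A$. Since $J\bowtie I$, being invertible, is a regular ideal, it contains some regular $(a,a+i)$ with $a\in J$; hence $J$ already contains the regular element $a$ and is a regular ideal of $A$. Moreover, because the composite $A\bowtie I \xrightarrow{\pi} A \hookrightarrow T(A)$ carries every regular element of $A\bowtie I$ to a unit of $T(A)$, the universal property of localization yields a surjective ring homomorphism $\bar\pi: T(A\bowtie I)\to T(A)$ extending $\pi$, with $\bar\pi\big((a,a+i)/(b,b+i')\big)=a/b$.

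Next I would push the invertibility relation forward along $\bar\pi$. Writing $K=J\bowtie I$, invertibility gives $K\,K^{-1}=A\bowtie I$ inside $T(A\bowtie I)$, where $K^{-1}=(A\bowtie I : K)$. Applying $\bar\pi$ and using that a ring homomorphism sends a product of (fractional) ideals to the product of the images, I get $\bar\pi(K)\,\bar\pi(K^{-1})=\bar\pi(A\bowtie I)=A$, that is, $J\cdot\bar\pi(K^{-1})=A$, since $\bar\pi(K)=\pi(K)=J$. It then remains only to check that $\bar\pi(K^{-1})\subseteq (A:J)=J^{-1}$: from the defining containment $K^{-1}K\subseteq A\bowtie I$ one applies $\bar\pi$ to obtain $\bar\pi(K^{-1})\,J\subseteq A$. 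Combining these, $A=J\,\bar\pi(K^{-1})\subseteq J J^{-1}\subseteq A$, whence $J J^{-1}=A$ and $J$ is invertible.

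I expect the only delicate point to be the construction and well-definedness of $\bar\pi$, i.e. verifying that $\pi$ respects the localizations at regular elements; everything there rests on the fact that a regular element of $A\bowtie I$ has regular first coordinate, which is the input borrowed from Lemma~\ref{regu}. Once $\bar\pi$ is in hand the remainder is a purely formal transport of the identity $KK^{-1}=A\bowtie I$ and its companion containment $K^{-1}K\subseteq A\bowtie I$; I would only take care that all products are read as products of fractional ideals in the respective total quotient rings and that $\bar\pi$ is genuinely multiplicative on them.
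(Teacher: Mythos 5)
Your proof is correct, but it takes a genuinely different route from the paper's. The paper invokes the characterization (recalled from Huckaba) that an ideal is invertible iff it is finitely generated, regular, and locally principal: it observes that $J$ inherits finite generation and regularity from $J\bowtie I$, and then checks principality of $J_M$ at each maximal ideal $M$ of $A$ by splitting into the cases $I\subseteq M$ (where $(J\bowtie I)_{M\bowtie I}\cong J_M\bowtie I_M$) and $I\nsubseteq M$ (where $(J\bowtie I)_{M\bowtie I}\cong J_M$). You instead work directly with the definition $KK^{-1}=A\bowtie I$ and push it forward along the map $\bar\pi:T(A\bowtie I)\to T(A)$ induced by the first projection; the one nontrivial input is that a regular element of $A\bowtie I$ has regular first coordinate, which is exactly the fact about $Z(A\bowtie I)$ already used in Lemma~\ref{regu}, and the rest is formal (with the products read as $A$-submodule products, which is legitimate since $\bar\pi(A\bowtie I)=A$). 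The paper's argument leans on the known structure of localizations of $A\bowtie I$ at primes of the form $M\bowtie I$; yours avoids that entirely and in fact proves a more general transfer principle: for any surjective ring homomorphism carrying regular elements to regular elements, the image of an invertible ideal is invertible. Both are sound; yours is arguably more self-contained, while the paper's stays within the localization toolkit it uses elsewhere.
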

\proof Clearly, $J$ is a finitely generated regular ideal of $A$ since $J\bowtie I$ is an invertible ideal of $A\bowtie I$. Now, let $M$ be a maximal ideal of $A$. If $I\subseteq M$, then $(J\bowtie I)_{M\bowtie I}\cong J_{M}\bowtie I_{M}$ and hence $J_{M}$ is a principal ideal of $A_{M}$. If $I\nsubseteq M$, then $(J\bowtie I)_{M\bowtie I}\cong J_{M}$ and hence $J_{M}$ is a principal ideal of $A_{M}$, as desired.
\qed

\begin{lem}\label{fin}
Let $A$ be a ring and $I$, $J$ two ideals of $A$. If $J$ is regular finitely generated and $I=aI$ for each $a\in Reg(A)$, then $J\bowtie I$ is a finitely generated ideal of $A\bowtie I$.
\end{lem}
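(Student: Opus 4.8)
The plan is to produce an explicit finite generating set for $J\bowtie I$. Since $J$ is regular and finitely generated, I would first fix generators $b_1,\dots,b_k$ of $J$ chosen so that $b_1\in Reg(A)$; this is harmless because $J$ contains a regular element, which may simply be adjoined to any finite generating set. Recalling that $J\bowtie I=\{(j,j+i)\mid j\in J,\ i\in I\}$, my claim is then that
$$J\bowtie I=\big((b_1,b_1),\dots,(b_k,b_k)\big)$$
as an ideal of $A\bowtie I$.

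The inclusion $\supseteq$ is immediate, since each $(b_\ell,b_\ell)=(b_\ell,b_\ell+0)$ lies in $J\bowtie I$. For the reverse inclusion I would take an arbitrary $(j,j+i)$ with $j\in J$ and $i\in I$ and split it as $(j,j)+(0,i)$. Writing $j=\sum_{\ell}a_\ell b_\ell$ with $a_\ell\in A$, the diagonal part is handled by $(j,j)=\sum_\ell (a_\ell,a_\ell)(b_\ell,b_\ell)$, which clearly lies in the generated ideal. To capture the off-diagonal term $(0,i)$ I would invoke the hypothesis $I=aI$ for every $a\in Reg(A)$: since $b_1$ is regular we have $I=b_1 I$, so $i=b_1 i'$ for some $i'\in I$. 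Then $(0,i')\in A\bowtie I$ and $(0,i')(b_1,b_1)=(0,b_1 i')=(0,i)$, so $(0,i)$ belongs to the generated ideal as well. Adding the two contributions shows $(j,j+i)$ is an $A\bowtie I$-combination of the $(b_\ell,b_\ell)$, which proves the claim and hence the finite generation of $J\bowtie I$.

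The only genuinely delicate point is recovering $(0,i)$: the diagonal generators $(b_\ell,b_\ell)$ by themselves produce only diagonal elements, and it is precisely the factorization $i=b_1 i'$ afforded by $I=b_1 I$ that lets a single regular generator sweep out all of $0\times I$. Everything else—that $J\bowtie I$ is indeed an ideal of $A\bowtie I$ and that the indicated products have the asserted second coordinates—is a routine componentwise verification that I would leave implicit.
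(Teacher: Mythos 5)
Your proof is correct and follows essentially the same strategy as the paper's: both use the diagonal elements $(b_\ell,b_\ell)$ as generators and exploit $I=zI$ for a regular $z\in J$ to absorb the off-diagonal part $0\times I$. The only cosmetic difference is that you adjoin the regular element to the generating set, while the paper instead expresses it as a combination $z=\sum\beta_\ell a_\ell$ of the given generators and folds the resulting coefficients $\beta_\ell k$ into the second coordinates.
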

\proof Suppose that $J$ is finitely generated, say $J=\sum_{i=0}^{n}Aa_i$. Clearly $\sum_{i=0}^{n}(A\bowtie I)(a_i,a_i)\subseteq J\bowtie I$. Now, let $(a,a+j)\in J\bowtie I$, we have $(a,a+j)=(\sum_{i=0}^{n}\alpha_ia_i,\sum_{i=0}^{n}\alpha_ia_i+j)$. Since $I=aI$ for each $a\in Reg(A)$, we get $j=zk$, where $z$ is a regular element of $J$. Then $j=\sum_{i=0}^{n}\beta_ika_i$ and hence $(a,a+j)=\sum_{i=0}^{n}(\alpha_i,\alpha_i+\beta_ik)(a_i,a_i)\in \sum_{i=0}^{n}(A\bowtie I)(a_i,a_i)$. Therefore $J\bowtie I$ is finitely generated, as desired.

\qed

\textbf{Proof of Theorem \ref{dup}}

$(1)$ Note that if $Q$ is a regular ideal of $A$, then $Q\bowtie I$ is a regular ideal of $A\bowtie I$. Since $A\bowtie I$ is an ISP-ring, we can write $Q\bowtie I=JH_1\cdots H_n$ with $J$ an invertible ideal, $n\geq 1$ and all $H_i$'s are proper radical ideals. Further, as $J$ and each $H_i$ contains $0\times I$, therefore $J=\widetilde{J}\bowtie I$ and $H_i=\widetilde{H}_i\bowtie I$,  where $\widetilde{J}$ is an invertible ideal of $A$ by Lemma \ref{inver} and each $\widetilde{H}_i$ a proper radical ideal of $A$. Hence $Q=\widetilde{J}\widetilde{H}_1\cdots \widetilde{H}_n$.

$(2)$ By $(1)$ it suffices to prove that if $A$ is an ISP-ring then $A\bowtie I$ is an ISP-ring. Let $L$ be a proper regular ideal of $A\bowtie I$. By Lemma \ref{regu}, $L=Q\bowtie I$ where $Q$ is a proper regular ideal of $A$. As $A$ is an ISP-ring, we can write $Q=JH_1\cdots H_n$ with $J$ an invertible ideal, $n\geq 1$ and all $H_{i}$'s are radical ideals. Then $Q\bowtie I=(JH_1\cdots H_n)\bowtie I=(J\bowtie I)(H_1\bowtie I)\cdots (H_n\bowtie I)$ since $I=aI$ for each $a\in Reg(A)$. Clearly $J\bowtie I$ is regular, locally principal and finitely generated by Lemma \ref{fin} and hence $J\bowtie I$ is an invertible ideal. Also, each $H_i\bowtie I$ is a proper radical ideal, therefore $A\bowtie I$ is an ISP-ring.

\qed

By using the above results on trivial ring extension and amalgamated duplication, we give examples of ISP-rings.
\begin{exam}
Let $A=\Z\propto \Q$ be a trivial ring extension of $\Z$ by $\Z$-module $\Q$ and $I=0\propto \Q$ an ideal of $A$. Then $A\bowtie I$ is an ISP-ring. Indeed, by Example \ref{exam}, $A$ is an ISP-ring. Since $I=(n,q)I$ for each $(n,q)\in Reg(A)$, therefore $A\bowtie I$ is an ISP-ring by Theorem \ref{dup}.
\end{exam}

\begin{exam}
Let $A=\Z/8\Z$ be a ring and $I=2\Z/8\Z$ an ideal of $A$. By Theorem \ref{dup}, $A\bowtie I$ is an ISP-ring.
\end{exam}
We conclude the section by the following remark.

\begin{rem}
We cannot obtain nontrivial examples of ISP-rings $A\bowtie I$ starting with a total quotient ring $A$. Indeed, if $A$ is a total quotient ring, then so is $A\bowtie I$ (and hence it is an ISP-ring). To see this, if $(a,b)\in A\bowtie I$ is a regular element, then $a,b$ are regular elements in $A$ (hence units) and $a-b\in I$. Let $aa'=1$ and $bb'=1$ with $a',b' \in A$. Then $a'-b'\in I$, so $(a',b')\in A\bowtie I$.
\end{rem}

\section{Marot ISP-rings}
All the rings in this section are {\it Marot}, that is, their regular ideals are generated by regular elements (see \cite[p. 13]{Huckaba}).

\begin{prop}\label{regprime}
Let $A$ be an ISP-ring and $P\subset M$ regular prime ideals. Then $P\subseteq M^{2}A_{(M)}$.
\end{prop}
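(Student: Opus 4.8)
The plan is to pass first to the regular localization $A_{(M)}$, which is again an ISP-ring by Proposition~\ref{Loca}, and to reduce the statement to the assertion $PA_{(M)}\subseteq (MA_{(M)})^2$. The point of localizing is that in $R:=A_{(M)}$ the ideal $\mathfrak{m}:=MA_{(M)}$ becomes the unique maximal regular ideal: a regular element of $R$ lying outside $\mathfrak{m}$ is a unit, so (using that the regular localization of a Marot ring is Marot) every proper regular ideal of $R$ is contained in $\mathfrak{m}$; in particular any proper ideal containing $\mathfrak{m}$ is regular and hence equals $\mathfrak{m}$, so $\mathfrak{m}$ is a maximal ideal. Writing $\mathfrak{p}:=PA_{(M)}$, a regular prime with $\mathfrak{p}\subsetneq\mathfrak{m}$, it then remains to prove $\mathfrak{p}\subseteq\mathfrak{m}^2$.

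The engine is the ISP-factorization combined with primality. I would first factor $\mathfrak{m}$ itself: in any factorization $\mathfrak{m}=JH_1\cdots H_n$ each proper radical factor satisfies $H_i\supseteq\mathfrak{m}$, hence $H_i=\mathfrak{m}$, so $\mathfrak{m}=J\mathfrak{m}^n$. If either $n\geq 2$ or $J\neq R$, this forces $\mathfrak{m}=\mathfrak{m}^2$, and then $\mathfrak{p}\subseteq\mathfrak{m}=\mathfrak{m}^2$ and we are done. So I may assume $\mathfrak{m}$ is not idempotent. Next I factor $\mathfrak{p}=JH_1\cdots H_n$ with $J$ invertible and the $H_i$ proper radical. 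Since $\mathfrak{p}$ is prime and contains the product, it contains, hence equals, one of the factors; the case $\mathfrak{p}=J$ is excluded, because cancelling the invertible ideal $\mathfrak{p}$ would give $R=H_1\cdots H_n\subseteq H_1\subsetneq R$. Thus $\mathfrak{p}=H_{i_0}$ and $\mathfrak{p}=C\mathfrak{p}$ with $C=J\prod_{j\neq i_0}H_j$. If $C$ is a proper ideal then $C\subseteq\mathfrak{m}$, whence $\mathfrak{p}=C\mathfrak{p}\subseteq\mathfrak{m}\mathfrak{p}\subseteq\mathfrak{m}^2$, as desired.

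The single remaining possibility is that $\mathfrak{m}$ is not idempotent while the only factorization available for $\mathfrak{p}$ is the trivial one $\mathfrak{p}=R\cdot\mathfrak{p}$ (that is, $C=R$); this is the crux. To dispose of it I would show that a non-idempotent $\mathfrak{m}$ is invertible, hence, being regular and locally principal over $R$, principal, say $\mathfrak{m}=tR$ with $t$ regular. Granting this, a valuation-type argument finishes the proof: for a regular $x\in\mathfrak{p}$ write $x=ty$; if $y$ were a unit then $\mathfrak{m}=tR=xR\subseteq\mathfrak{p}$, contradicting $\mathfrak{p}\subsetneq\mathfrak{m}$, so $y\in\mathfrak{m}$ and $x=ty\in\mathfrak{m}^2$. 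Since $P$ is generated by regular elements (Marot), so is $\mathfrak{p}$, and therefore $\mathfrak{p}\subseteq\mathfrak{m}^2$.

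The main obstacle is exactly this invertibility claim. The natural route is to use $\mathfrak{m}\neq\mathfrak{m}^2$ to select a regular $b\in\mathfrak{m}\setminus\mathfrak{m}^2$; factoring $bR$ and using $b\notin\mathfrak{m}^2$ shows (as in the second paragraph) that $bR$ must already be a radical ideal, and then one wants $b$ to lie in no regular prime strictly below $\mathfrak{m}$, so that $\sqrt{bR}=\mathfrak{m}$ and hence $\mathfrak{m}=bR$ is principal. Securing such a choice is the delicate step, since the regular primes properly contained in $\mathfrak{m}$ may be infinite in number and ordinary prime avoidance does not apply directly; this is where the finer Marot/valuation-theoretic structure of ISP-rings, anticipating the Manis valuation description of Theorem~\ref{ispN}, has to be invoked.
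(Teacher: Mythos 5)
Your reduction to the regular localization, the dichotomy on whether $\mathfrak{m}$ is idempotent, and the observation that factoring $\mathfrak{p}$ itself yields nothing (a radical ideal always admits the trivial factorization $R\cdot\mathfrak{p}$) are all sound. But the argument never closes: everything hinges on the claim that a non-idempotent $\mathfrak{m}$ is principal, and the route you sketch for it --- choose a regular $b\in\mathfrak{m}\setminus\mathfrak{m}^2$ lying in no regular prime properly contained in $\mathfrak{m}$, so that $\sqrt{bR}=\mathfrak{m}$ --- is circular. The assertion that every regular prime properly contained in $\mathfrak{m}$ sits inside $\mathfrak{m}^2$ (which is exactly what would guarantee that any $b\in\mathfrak{m}\setminus\mathfrak{m}^2$ avoids all of them) \emph{is} Proposition~\ref{regprime}; and Theorem~\ref{ispN}, which you propose to invoke, lies downstream of this proposition in the paper's logical order (it rests on Lemma~\ref{lemprin} and Lemma~\ref{zero-dim}, both of which use Proposition~\ref{regprime}). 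So the ``delicate step'' you flag is not a technicality: it is the entire content of the statement, and your proof leaves it open.

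The missing idea is to argue by contradiction and to factor an \emph{auxiliary} ideal rather than $\mathfrak{p}$ or $\mathfrak{m}$. After localizing, suppose $P\not\subseteq M^2$. Since $M$ is regular and the ring is Marot, $M$ is generated by regular elements, not all of which can lie in $P$; pick a regular $x\in M\setminus P$ and consider $(P,x^2)$. This ideal is proper, regular, and contains $P$, hence is not contained in $M^2$. In an ISP factorization $(P,x^2)=JH_1\cdots H_n$ every proper factor is a proper regular ideal and so lies in the unique regular maximal ideal $M$; thus if $J$ were proper or $n\geq 2$ we would get $(P,x^2)\subseteq M^2$, which is impossible. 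Hence $(P,x^2)$ is itself a radical ideal, so $x\in\sqrt{(P,x^2)}=(P,x^2)$ and $(P,x)=(P,x^2)$. Modding out by $P$ gives $\bar{x}=\bar{a}\bar{x}^{2}$ in the domain $A/P$ with $\bar{x}\neq 0$, so $\bar{x}$ is a unit of $A/P$, contradicting $x\in M$ with $M/P$ proper. This is the paper's proof; it needs no principality or invertibility of $M$ at all.
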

\proof By Proposition \ref{Loca}, we may assume that $A$ has only one regular maximal ideal $M$. Suppose that $P\nsubseteq M^{2}$. As $A$ is a Marot ring, we get $x\in M\setminus P$ a regular element of $M$. It is a consequence of both $A$ being an ISP-ring and $P\not\subseteq M^2$ that $(P,x^2)$ has to be a radical ideal. Moreover, $(P,x^2)=(P,x)$, since $(P,x^2)$ is a radical ideal, which gives a contradiction after modding out by $P$.

\qed

\begin{lem}\label{p=jp}
Let $A$ be a ring and $P$ a prime ideal of $A$ such that $P\subset I$ for some multiplication ideal $I$ of $A$. Then $P=IP$.
\end{lem}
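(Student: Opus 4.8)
The plan is to exploit the defining property of a multiplication ideal, namely that every ideal contained in $I$ is a multiple of $I$. Since $P\subseteq I$, there is an ideal $C$ of $A$ with $P=CI$; concretely one may take the residual quotient $C=(P:I)=\{a\in A \mid aI\subseteq P\}$, so that $P=(P:I)I$. The entire argument then reduces to showing that $C$ actually lies inside $P$: once we know $C\subseteq P$, we immediately obtain $P=CI\subseteq IP\subseteq P$, and hence $P=IP$.

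To establish $(P:I)\subseteq P$, I would take an arbitrary $c\in(P:I)$, so that $cI\subseteq P$, and invoke the primeness of $P$. If $c\notin P$, then from $cI\subseteq P$ and the fact that $P$ is prime we would conclude $I\subseteq P$; combined with the hypothesis $P\subseteq I$ this forces $P=I$, contradicting the strict inclusion $P\subset I$. Therefore $c\in P$, and since $c$ was arbitrary, $(P:I)\subseteq P$, as required.

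The only place where the hypotheses genuinely enter is this last step: the primeness of $P$ converts the containment $cI\subseteq P$ into the dichotomy ``$c\in P$ or $I\subseteq P$,'' while the strict inclusion $P\subset I$ is exactly what rules out the second alternative. The multiplication-ideal hypothesis, in turn, is what supplies the factorization $P=(P:I)I$ at the outset; without it there would be no reason for the residual quotient to recover $P$. I do not expect any serious obstacle, since the closing computation $P=CI\subseteq IP\subseteq P$ is routine once $C\subseteq P$ is known; the whole content of the lemma sits in the elementary prime-ideal argument showing $(P:I)\subseteq P$.
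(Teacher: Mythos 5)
Your proof is correct and follows essentially the same route as the paper's: write $P=CI$ using the multiplication-ideal hypothesis, use primeness of $P$ together with $I\not\subseteq P$ (forced by the strict inclusion $P\subset I$) to conclude $C\subseteq P$, and then deduce $P=CI\subseteq IP\subseteq P$. Your explicit choice $C=(P:I)$ is a harmless refinement of the paper's generic factor $J$.
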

\proof 

There is some ideal $J$ of $A$ such that $P=IJ$. Since $P$ is a prime ideal of $A$ and $I\not\subseteq P$, we infer that $J\subseteq P$, and hence $J=P$. Therefore, $P=IP$. 

\qed

Before we go any further, let us recall \cite[Theorem 2.1]{Lu}.
\begin{thm} {\em (\cite[Theorem 2.1]{Lu})}\label{invert-prin}
If $A$ is a ring with a unique regular maximal ideal $M$, then each invertible ideal of $A$ is principle (and hence generated by a regular element).
\end{thm}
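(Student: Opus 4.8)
The plan is to prove this by localizing at $M$ and then gluing. First I would dispose of the trivial case: if $J=A$ then $J=(1)$ is principal, so assume $J$ is a proper ideal. Since $J$ is invertible it is regular, hence every maximal ideal containing $J$ contains a regular element and must therefore be a regular maximal ideal, i.e.\ equal to $M$; thus $J\subseteq M$ while $J\nsubseteq M'$ for every maximal ideal $M'\neq M$. Consequently $J_{M'}=A_{M'}$ for all $M'\neq M$, and because an invertible ideal is locally principal, $J_M$ is a principal (regular) ideal of the local ring $A_M$. The entire burden of the argument is thereby concentrated at the single localization $A_M$.

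Next I would produce a single regular generator. Since we are working in a Marot ring, the regular ideal $J$ is generated by regular elements, say $J=(c_1,\dots,c_n)$ with each $c_i\in Reg(A)$. In the local ring $A_M$ the principal ideal $J_M=(c_1/1,\dots,c_n/1)A_M$ must already be generated by one of these generators: a standard Nakayama argument shows that a finitely generated ideal of a local ring which happens to be principal is generated by a single element of any prescribed finite generating set. Fix an index $i$ with $J_M=(c_i/1)A_M$ and set $a:=c_i$, a regular element of $J$.

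Finally I would glue. Clearly $aA\subseteq J$, so it suffices to check equality after localizing at each maximal ideal. At $M$ we have $(aA)_M=(a/1)A_M=J_M$ by the choice of $a$. At any maximal ideal $M'\neq M$ the element $a$ is regular, whereas $M'$ is a non-regular maximal ideal (being distinct from the unique regular maximal ideal $M$, it contains no regular element), so $a\notin M'$ and hence $(aA)_{M'}=A_{M'}=J_{M'}$. Since $aA$ and $J$ have the same localization at every maximal ideal of $A$, they coincide: $J=aA$ is principal and generated by the regular element $a$, which is exactly the assertion.

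The step I expect to be the genuine obstacle is the production of the regular generator. The Marot hypothesis in force throughout this section makes it painless, since it hands us a generating set consisting entirely of regular elements, one of which must generate $J_M$. Without that hypothesis one only knows $J_M=a_0A_M$ for some, a priori non-regular, $a_0\in J$, and one would then have to replace $a_0$ by $a_0+ct$ for a suitable $t\in A$ and a fixed regular $c\in J$ so that the result lands outside every maximal prime of the set $Z(A)$ of zero-divisors while still generating $J_M$; this is a prime-avoidance problem over the (possibly infinite) family of such primes, and it is precisely the delicate point that the Marot assumption circumvents.
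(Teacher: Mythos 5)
The paper does not prove this statement at all --- it is imported verbatim as \cite[Theorem 2.1]{Lu}, so there is no internal proof to compare against. Judged on its own, your argument is correct in the context in which the paper uses the result, namely Section~3 where the standing Marot hypothesis is in force. Each step checks out: an invertible ideal is regular, so every maximal ideal containing it is regular and hence equals $M$; invertibility gives local principality at $M$ and triviality elsewhere; the Marot property together with finite generation of $J$ yields finitely many \emph{regular} generators $c_1,\dots,c_n$ (the only point you glide over is that one must combine Marot with finite generation to extract a \emph{finite} regular generating set, but that is a one-line reduction); Nakayama over the local ring $A_M$ forces some $c_i$ to generate $J_M$; and the regularity of $c_i$ is exactly what guarantees $c_i\notin M'$ for every non-regular maximal ideal $M'\neq M$, so that $c_iA$ and $J$ agree locally everywhere and hence coincide. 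Your closing remark is also well taken: the Marot hypothesis is precisely what lets you dodge a prime-avoidance problem over the (possibly infinite) set of maximal primes of $Z(A)$, and Lucas's original formulation, being stated for rings that need not be Marot, has to work harder at exactly that point. So your proof establishes the theorem in the generality the paper actually needs, by a clean local--global argument, even if it is not a proof of Lucas's statement in full generality.
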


\begin{lem}\label{lemprin}
Let $A$ be an ISP-ring, $P\subset M$ regular prime ideals and $x\in M\setminus P$ a regular element of $A$ such that $M$ is minimal over $(P,x)$. Then $MA_{(M)}$ is principal and hence invertible.
\end{lem}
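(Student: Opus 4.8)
We want to show that under the stated hypotheses — $A$ an ISP-ring, $P \subset M$ regular primes, $x \in M \setminus P$ regular with $M$ minimal over $(P,x)$ — the ideal $MA_{(M)}$ is principal (hence invertible). Let me think about what tools are available.

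We've just proven Proposition 3.1 (regprime): $P \subseteq M^2 A_{(M)}$. And we have Theorem 3.4 (invert-prin): in a ring with unique regular maximal ideal, invertible ideals are principal. And Lemma 3.3 (p=jp): if $P \subset I$ with $I$ a multiplication ideal, then $P = IP$.

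The strategy should be: by Proposition 1.9 (Loca), localizing at $M$ preserves ISP. So WLOG $A$ is local (unique regular maximal ideal $M$). Then by Theorem 3.4, it suffices to show $M$ (or rather $MA_{(M)}$) is invertible. Since we've reduced to the local-at-$M$ case, we want to show $M$ itself is invertible in $A = A_{(M)}$.

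**The key move.** Now $M$ is minimal over $(P,x)$. In the localized ring with unique regular maximal $M$, the ideal $(P,x)$ is $M$-primary (its radical is $M$, and $M$ is the unique regular maximal). Since $A$ is ISP, the regular ideal $(P,x)$ factors as $(P,x) = J H_1 \cdots H_n$ with $J$ invertible and $H_i$ proper radical. Each proper radical ideal is an intersection of primes; since $(P,x)$ has radical $M$ and $M$ is the unique regular maximal, and the $H_i$ are proper radical ideals containing... we should be able to argue each $H_i = M$ (the radical ideals appearing must have radical inside $\sqrt{(P,x)} = M$, but they're radical and proper, and $M$ is maximal, forcing $H_i = M$).

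So $(P,x) = J M^n$ for some invertible $J$. Since $J$ is invertible in a local-type ring it's principal (Theorem 3.4), say $J = (d)$ with $d$ regular. Then $(P,x) = (d) M^n$.

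**Extracting principality of $M$.** The hard part — and the main obstacle — is descending from "$(P,x)$ factors with $M$-powers times invertible" to "$M$ is principal." I expect to use Proposition regprime here: $P \subseteq M^2$. From $(P,x) = (d)M^n$ we get $M^n \mid (P,x)$, so $(P,x) \subseteq M^n$; but $x \notin M^2$ forces $n=1$, giving $(P,x) = (d)M = dM$. Now the plan is to show $M$ is principal. Since $P \subseteq M^2 = dM \cdot (\text{something})$... Actually from $(P,x) = dM$ we have $x = dm$ for some $m \in M$, and $d \in (P,x)$. I would argue $d \notin M^2$ (since $(P,x)/M \cdot$ considerations, or because $P \subseteq M^2$ and $x$ generates $(P,x)$ mod $M^2$). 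Then from $(P,x) = dM$ and the containment $P \subseteq M^2 = d M \cdot M^{-1}\cdots$, I would conclude $M = (P,x) : d = $ the set of $a$ with $ad \in (P,x)$, but more cleanly: $dM = (P,x) \supseteq (x) = (dm)$, and since $M$ is invertible iff $(P,x)=dM$ with the right cancellation. The cleanest route: show $M = d^{-1}(P,x)$ as a fractional ideal, and since $(P,x)$ and $(d)$ are both comparable, deduce $M$ is invertible because it equals an invertible ideal divided by a principal one. The obstacle is making the cancellation $JM^n = $ rigorous to pin down $M$ itself rather than just $M^n$; I expect the Marot hypothesis (regular ideals generated by regular elements) to be what makes $x$, $d$ available as genuine regular elements and lets the factorization manipulations go through.

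\begin{proof}[Proof sketch]
By Proposition~\ref{Loca} we may localize and assume $A = A_{(M)}$ has unique regular maximal ideal $M$. Since $M$ is minimal over the regular ideal $(P,x)$ and $M$ is the only regular maximal ideal, $\sqrt{(P,x)} = M$. As $A$ is an ISP-ring, write $(P,x) = JH_1\cdots H_n$ with $J$ invertible and each $H_i$ a proper radical ideal; each $H_i$ is radical with $\sqrt{(P,x)}=M$ forcing $H_i = M$, so $(P,x) = JM^n$. By Theorem~\ref{invert-prin}, $J = (d)$ is principal with $d$ regular, whence $(P,x) = dM^n \subseteq M^n$. Since $x \notin M^2$ by Proposition~\ref{regprime} (which gives $P \subseteq M^2$), we must have $n = 1$, so $(P,x) = dM$. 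Therefore $M = d^{-1}(P,x)$ is the product of the invertible ideal $(P,x)$ and the invertible fractional ideal $(d)^{-1}$, hence invertible; being invertible over the ring with unique regular maximal ideal $M$, it is principal by Theorem~\ref{invert-prin}.
\end{proof}
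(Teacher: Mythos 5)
Your reduction to the case where $M$ is the unique regular maximal ideal, and the identification of each radical factor of $(P,x)$ with $M$ (so that $(P,x)=JM^n$ with $J=dA$ principal by Theorem~\ref{invert-prin}), match the paper. But the argument then breaks down at two points. First, the assertion that $x\notin M^2$ does not follow from Proposition~\ref{regprime}: that proposition gives $P\subseteq M^2A_{(M)}$ and says nothing about $x$, which is an \emph{arbitrary} regular element of $M\setminus P$ and may perfectly well lie in $M^2$ (and the lemma is later invoked, e.g.\ in Lemma~\ref{zero-dim}, with an $x$ over which one has no such control). Second, and more seriously, the concluding step is circular: from $(P,x)=dM$ you declare $M=d^{-1}(P,x)$ invertible ``as the product of the invertible ideal $(P,x)$ and $(d)^{-1}$,'' but nothing shows $(P,x)$ is invertible --- the ISP factorization only exhibits $(P,x)$ as an invertible ideal times radical ideals, and the radical factor $M$ is exactly the ideal whose invertibility is in question. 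Since $d$ is regular, invertibility of $(P,x)=dM$ is \emph{equivalent} to invertibility of $M$, so this begs the question. You flagged this cancellation issue yourself in the discussion, but the sketch does not resolve it.

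The paper closes the gap by a genuinely different mechanism. It first proves $M\neq M^2$ by contradiction: assuming $M=M^2$, the factorization gives $(P,x)=yM$ with $y$ regular; Lemma~\ref{p=jp} yields $P=yP$, writing $x=yz$ one cancels the regular element $y$ from $yM=(yP,yz)$ to get $M=(P,z)$, and then $M/P$ would be a nonzero principal idempotent maximal ideal of the domain $A/P$, which is absurd. With $M\neq M^2$ in hand, the Marot hypothesis supplies a \emph{regular} element $\pi\in M\setminus M^2$, Proposition~\ref{regprime} shows $M$ is the only regular prime containing $\pi$, and one then factors the \emph{principal} (hence honestly invertible) ideal $\pi A=JM^n$; since $\pi\notin M^2$ this forces $n=1$ and $J=A$, so $M=\pi A$. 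The key idea you are missing is that one must apply the ISP factorization to an ideal already known to be invertible (a principal ideal generated by a regular element outside $M^2$), not to $(P,x)$, and producing such an element requires first ruling out $M=M^2$.
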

\proof By Proposition \ref{Loca}, we may assume that $M$ is the only regular maximal ideal of $A$. We show that $M$ is not idempotent. On the contrary assume that $M^2=M$. Note that $\sqrt{(P,x)}=M$ is the only proper radical ideal containing $(P,x)$. As $A$ is an ISP-ring and $M=M^2$. Combining these assumptions with Theorem \ref{invert-prin},  we get $(P,x)=yM$ for some regular element $y \in A$. Also $P\subset yM$ implies $y\not\in P$, otherwise $P=yA\subseteq yM$ which is impossible. Hence by Lemma \ref{p=jp}, $P=yP$. From $x\in yM$, we get $x=yz$ for some $z\in M$. Now $yM=(yP, yz)$ implies $M=(P,z)$, so $M/P$ is a nonzero principal idempotent maximal ideal of $A/P$, a contradiction. Thus $M$ is not idempotent and let us pick a regular element $\pi\in M-M^2$. By Proposition \ref{regprime}, $M$ is the only regular prime ideal containing $\pi$, so $\pi A=M$ because $A$ is an ISP-ring.

\qed

An ideal $I$ of a ring $A$ is called zero-dimensional if the factor ring $A/I$ is zero-dimensional.
\begin{lem}\label{zero-dim}
Any proper invertible radical ideal of an ISP-ring is zero-dimensional.
\end{lem}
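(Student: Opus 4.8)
The plan is to prove that any proper invertible radical ideal $I$ of an ISP-ring $A$ is zero-dimensional, i.e.\ every prime ideal of $A$ containing $I$ is maximal. Suppose toward a contradiction that there exist prime ideals $P \subsetneq Q$ with $I \subseteq P$. Since $I$ is invertible, it is regular, so $P$ and $Q$ are regular prime ideals, and Proposition~\ref{regprime} applies to the chain $P \subset Q$. The idea is to combine the invertibility of $I$ (which forces $I$ to behave like an honest non-idempotent factor) with the conclusion of Proposition~\ref{regprime}, namely $P \subseteq Q^2 A_{(Q)}$, to derive a contradiction.

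First I would localize: by Proposition~\ref{Loca} the regular localization $A_{(Q)}$ is again an ISP-ring, and localizing preserves the relevant data ($I A_{(Q)}$ remains a proper invertible radical ideal, since radical and invertibility localize well, and $P A_{(Q)} \subset Q A_{(Q)}$ is still a chain of regular primes). So I may assume $A$ has a unique regular maximal ideal $Q = M$. Now $I \subseteq P \subset M$, and since $I$ is invertible in a ring with a unique regular maximal ideal, Theorem~\ref{invert-prin} gives $I = \pi A$ for some regular element $\pi \in A$. Being a proper radical ideal, $I = \pi A = \sqrt{\pi A}$; in particular $\pi A$ is radical.

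The key step is to exploit that $\pi A$ is radical while also being contained in the prime $P$, together with the squeeze from Proposition~\ref{regprime}. From $I = \pi A \subseteq P \subseteq M^2 A_{(M)} = M^2$ (using $A = A_{(M)}$ after localization), we get $\pi \in M^2$, so $\pi = \sum m_j m_j'$ with $m_j, m_j' \in M$. Since $\pi A$ is invertible and hence locally principal, and $M$ is the unique regular maximal ideal, I expect $\pi A$ to generate a non-idempotent part of $M$; the containment $\pi A \subseteq M^2$ should conflict with $\pi A$ being radical. Concretely, from $\pi \in M^2$ one checks that $\pi^{1/?}$-type membership forces $M$ into the radical of $\pi A = I$, but $\sqrt{I} = I \subsetneq M$, contradicting $I \subseteq P \subset M$. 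The cleanest route: because $I$ is radical and $\pi \in M^2$, consider $I = \sqrt{I}$ and show $M \subseteq \sqrt{I}$ using that any prime containing $\pi A$ must contain $M$ once $\pi \in M^2$ is combined with minimality; this yields $M \subseteq I \subseteq P \subset M$, the desired contradiction.

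The main obstacle I anticipate is the final incompatibility: making precise why a regular element $\pi$ lying in $M^2$ cannot generate a radical ideal strictly below $M$. The honest argument is that invertibility forces $\pi A$ to be locally principal, so $\pi A_{(M)} = M$ would be needed for $M$ to be the radical, yet $\pi \in M^2$ pushes $\pi$ strictly inside $M^2 \subsetneq M$ (when $M \ne M^2$), so $\pi$ cannot generate $M$; handling the idempotent case $M = M^2$ separately (where Lemma~\ref{lemprin}-type reasoning or Theorem~\ref{invert-prin} rules out a proper invertible radical ideal) is the delicate part. I would therefore split into the cases $M = M^2$ and $M \ne M^2$, using Proposition~\ref{regprime} to feed $P \subseteq M^2$ in both, and in each case derive that $I = \pi A$ cannot simultaneously be proper, radical, and contained in a non-maximal prime.
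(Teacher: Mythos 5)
Your overall strategy (pass to $A_{(M)}$, reduce to a unique regular maximal ideal, use principality plus radicality to force a contradiction) points in the right direction, but the step that is supposed to produce the contradiction does not work, and the ingredient that actually makes the paper's proof go through is missing. Concretely: your proposed mechanism is to show $M\subseteq\sqrt{I}$ on the grounds that ``any prime containing $\pi A$ must contain $M$'' once $\pi\in M^2$. That claim is false in your own setup: $P$ is a prime containing $\pi A=I$ with $P\subsetneq M$, so $M\not\subseteq\sqrt{I}$. Likewise, the mere containment $\pi\in M^2$ (which you get from Proposition~\ref{regprime}) is not in conflict with $\pi A$ being a proper radical ideal --- in particular it says nothing at all when $M=M^2$, and you explicitly leave that case unresolved. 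So no contradiction is actually derived.

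What the paper does differently, and what you are missing, is principality of $M$ rather than of $I$. One does not take an arbitrary prime $Q\supsetneq P$: one chooses a regular $x\in M\setminus P$ (using Marot) and replaces $M$ by a prime \emph{minimal over} $(P,x)$; this minimality is exactly the hypothesis of Lemma~\ref{lemprin}, which yields $MA_{(M)}=wA_{(M)}$ principal (and in particular disposes of the idempotent case $M=M^2$ inside that lemma). With $M=wA$ in hand the contradiction is concrete: a regular $y\in I\subseteq M$ is $y=bw$; if $b\notin M$ then $b$ is a unit and $M=wA\subseteq I$, impossible, so $y=aw^2$; then $(aw)^2=ay\in I$ gives $aw\in\sqrt{I}=I$, hence $y\in wI$; since $A$ is Marot, $I$ is generated by such regular $y$, so $I\subseteq wI$, and cancelling the invertible ideal $I$ gives $A\subseteq wA=M$, a contradiction. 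Note that both the Marot hypothesis and the minimality of $M$ over $(P,x)$ are doing real work here; neither appears in your proposal, and without them the argument does not close.
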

\proof Let $A$ be an ISP-ring and $I$ a proper invertible radical ideal of $A$. On the contrary assume that $\dim(A/I)\geq 1$. Then there exist two prime ideals $P\subset M$ and a regular element $x\in M\setminus P$ such that $I\subseteq P$ and $M$ is minimal over $(P,x)$. By Lemma \ref{lemprin}, $MA_{(M)}$ is a principal invertible ideal. Changing $A$ by $A_{(M)}$, we may assume that $M$ is the only regular maximal ideal of $A$. Then $M=wA$ for some regular element $w\in A$. Let $y\in I$ be a regular element. As $I\subset M$, we get $y=bw$ for some $b\in A$. If $b \notin M $ then $I=M$ which is impossible, so $y=aw^2$ for some $a\in A$ and hence $aw\in \sqrt{I}=I$. Thus $I\subseteq wI$ (because $A$ is a Marot ring) and we get $A\subseteq M$, a contradiction.

\qed

It is notable that for a Marot ring $A$, the quotient ring $A_{(M)}$ and the large quotient ring $A_{[M]}$ coincide for each regular maximal ideal $M$ of $A$, for instance, see \cite[Theorem 7.6]{Huckaba}.
Recall that a ring $A$ is an $N$-ring if $A_{(M)}$ is a discrete rank one Manis valuation ring for each regular maximal ideal $M$ of $A$ (see \cite{Gf} and \cite{L}). A domain is an $N$-ring if and only if it is almost Dedekind. A ring $A$ is Pr\"ufer if every finitely generated regular ideal of $A$ is invertible. Our next result extends \cite[Corollary 4]{MT}.
\begin{thm}\label{ispN}
Any ISP-ring in which every regular prime ideal is maximal, is an $N$-ring.
\end{thm}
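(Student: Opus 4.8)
The plan is to reduce to the local situation and then identify the localization as a discrete rank one Manis valuation ring using the structural lemmas already established. First I would fix a regular maximal ideal $M$ of $A$ and invoke Proposition~\ref{Loca} to replace $A$ by $A_{(M)}$, so that we may assume $M$ is the unique regular maximal ideal; note that the Marot hypothesis is inherited and that the regular-primes-are-maximal assumption persists, so every nonunit regular element now lies in $M$ and $M$ is the only regular prime.

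The heart of the argument is to show $M$ is principal. I would aim to apply Lemma~\ref{lemprin}, but this requires a prime $P\subset M$ with a regular $x\in M\setminus P$ such that $M$ is minimal over $(P,x)$. Under the hypothesis that every regular prime is maximal, the natural candidate is $P=0$ when $A$ is a domain, but in general one must be careful since $M$ may be minimal over a principal regular ideal directly. The cleaner route is: pick any regular nonunit $x\in M$; since $M$ is the only regular prime of $A$ and $A$ is an ISP-ring, I would show $M=\sqrt{xA}$ and that $M$ is minimal over the regular ideal $xA$. Then Lemma~\ref{lemprin} (or the idempotent-exclusion argument inside its proof together with Proposition~\ref{regprime}) yields that $MA_{(M)}=M$ is principal, say $M=\pi A$ for a regular $\pi$, and hence invertible by Theorem~\ref{invert-prin}.

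Once $M=\pi A$ is principal and regular, I would prove that every regular ideal is a power of $M$. Here Lemma~\ref{zero-dim} is the key: any proper invertible radical ideal is zero-dimensional, and since $M$ is the unique regular prime, the only proper radical ideal meeting $Reg(A)$ is $M$ itself. Thus in any ISP-factorization $Q=JH_1\cdots H_n$ of a proper regular ideal, each radical factor $H_i$ must equal $M$ (being proper radical and regular, hence zero-dimensional equal to its own radical $M$), and the invertible factor $J$ is a power of $M$ by Theorem~\ref{invert-prin} and principality. Therefore every proper regular ideal of the localized ring equals $M^k=\pi^k A$ for some $k\geq 1$, and $\bigcap_k M^k=0$ because $M$ is not idempotent. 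This exhibits $A_{(M)}$ as a ring in which the regular ideals are exactly the powers of a principal regular maximal ideal, which is precisely the discrete rank one Manis valuation condition (using the Marot identification $A_{(M)}=A_{[M]}$ recalled before the statement).

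The main obstacle I anticipate is the very first reduction step: guaranteeing the existence of the configuration needed to trigger Lemma~\ref{lemprin}, i.e.\ showing that after localizing, $M$ is genuinely minimal over a principal regular ideal rather than merely containing one. Establishing $M=\sqrt{xA}$ for a regular nonunit $x$ relies essentially on the hypothesis that $M$ is the \emph{only} regular prime, so the argument hinges on translating ``every regular prime is maximal'' into ``the unique regular maximal ideal is the radical of every regular principal ideal.'' The remaining work—ruling out idempotency via Proposition~\ref{regprime} and forcing the radical factors to collapse to $M$ via Lemma~\ref{zero-dim}—should then be routine.
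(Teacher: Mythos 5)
Your overall strategy (localize at a regular maximal ideal $M$ via Proposition~\ref{Loca}, show $M$ is invertible, then control all regular ideals) starts in the same way as the paper, but two steps have genuine gaps, and the paper's proof avoids both. First, your route to the principality of $M$ goes through Lemma~\ref{lemprin}, whose hypotheses require a \emph{regular prime} $P$ properly contained in $M$; under the present hypothesis that every regular prime is maximal, no such $P$ exists, and the idempotent-exclusion argument inside that lemma's proof uses $P$ essentially (the contradiction is reached in $A/P$). You correctly sense this obstacle, but the proposed workaround ($M$ minimal over $xA$ with $M=\sqrt{xA}$) is never actually carried out and still does not match the lemma's hypotheses. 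The paper's argument here is much shorter: for a regular nonunit $x$, each proper radical factor in an ISP-factorization of $xA$ contains the regular element $x$, hence is an intersection of regular primes, hence equals $M$; so $xA=JM^n$, and since $xA$ is invertible, so is its factor $M$ (whence $M$ is not idempotent, and is principal by Theorem~\ref{invert-prin} if one wants that).

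Second, your concluding step asserts that every proper regular ideal is a power of $M$ because ``the invertible factor $J$ is a power of $M$ by Theorem~\ref{invert-prin} and principality''---this is a non sequitur: $J$ is principal and regular, but nothing forces a principal regular ideal to be a power of $M$ without an iteration whose termination you have not established (compare the chain argument in Corollary~\ref{rnisp}, which needs the regular-Noetherian hypothesis precisely to terminate such an iteration). Likewise ``$\bigcap_k M^k=0$ because $M$ is not idempotent'' is unjustified outside the Noetherian setting. The paper sidesteps all of this: from $H=\widetilde{J}M^k$ it extracts only that every finitely generated proper regular ideal is a product of invertible ideals, hence invertible, so $A$ is a Pr\"ufer ring; it then invokes \cite[Theorem 3]{L} to conclude that such a Pr\"ufer ring is an $N$-ring. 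That citation is the real engine of the proof, and your proposal would need either to use it or to supply the missing valuation-theoretic verification that you currently only sketch.
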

\proof Let $A$ be an ISP-ring such that every regular prime ideal of $A$ is maximal. We may assume that $A$ is not a total quotient ring. Let $M$ be a regular maximal ideal of $A$. By Proposition \ref{Loca}, $A_{(M)}$ is an ISP-ring. Changing $A$ by $A_{(M)}$, we may assume that $M$ is the only regular prime ideal of $A$. Let $x\in M$ be a regular element of $A$. Since $M$ is the only regular prime ideal of an ISP-ring $A$, therefore $xA=JM^n$ with an invertible ideal $J$ and $n\geq 1$. So $M$ is invertible and hence not idempotent. Let $H$ be any finitely generated proper regular ideal of $A$. As $A$ is an ISP-ring, we get $H=\widetilde{J}M^k$ for some invertible ideal $\widetilde{J}$ and $k\geq 1$. Therefore $H$ being a product of two invertible ideals is invertible and hence $A$ is a Pr\"ufer ring. Thus, by \cite[Theorem 3]{L} $A$ is an $N$-ring.

\qed

Recall that a regular-Noetherian ring is a ring whose regular ideals are finitely generated.
\begin{cor}\label{rnisp}
For a ring $A$ the following assertions are equivalent.
\begin{itemize}
\item[(1)] $A$ is a Dedekind ring.
\item[(2)] $A$ is a regular-Noetherian SP-ring.
\item[(3)] $A$ is a regular-Noetherian ISP-ring.
\end{itemize}

\end{cor}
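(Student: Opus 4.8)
The plan is to prove the cycle $(1)\Rightarrow(2)\Rightarrow(3)\Rightarrow(1)$. The implication $(2)\Rightarrow(3)$ requires nothing: an SP-ring is an ISP-ring (as observed right after the definition of ISP-ring), and the regular-Noetherian hypothesis is shared by both statements.

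For $(1)\Rightarrow(2)$ I would first note that a Dedekind ring is an ISP-ring, since prime ideals are radical and any factorization $I=P_1\cdots P_k$ of a proper regular ideal can be read as $I=A\cdot P_1\cdots P_k$, an invertible ideal times a nonempty product of proper radical ideals; the same remark gives the SP-property outright. What remains is that a Dedekind ring $A$ is regular-Noetherian, and the mechanism I would use is that every regular prime is invertible. The crucial substep is that an invertible regular prime $Q$ is maximal: being a proper invertible radical ideal, Lemma~\ref{zero-dim} forces it to be zero-dimensional, and as $A/Q$ is then a zero-dimensional domain, hence a field, $Q$ is maximal. Given a regular prime $P$, I pick a regular $a\in P$ and factor $aA=P_1\cdots P_m$; each $P_i$, being a factor of the invertible ideal $aA$, is invertible and therefore maximal, and since the prime $P$ contains $aA=P_1\cdots P_m$ it contains some $P_i$, whence $P=P_i$ is invertible. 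Consequently every regular ideal, being a finite product of invertible primes, is invertible, in particular finitely generated, so $A$ is regular-Noetherian.

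The substantive implication is $(3)\Rightarrow(1)$; here let $A$ be a regular-Noetherian ISP-ring. I would first show that every regular prime is maximal. Suppose not, so a regular prime sits strictly inside a regular maximal ideal $M$; using the ACC on regular ideals, choose $P$ maximal among the regular primes strictly contained in $M$. Since $A$ is Marot, $M$ is generated by regular elements, not all of which lie in $P$, so there is a regular $x\in M\setminus P$; by maximality of $P$, no prime lies strictly between $(P,x)$ and $M$, i.e. $M$ is minimal over $(P,x)$. Lemma~\ref{lemprin} then gives $MA_{(M)}=\pi A_{(M)}$ with $\pi$ regular. Passing to $A_{(M)}$ (still an ISP-ring by Proposition~\ref{Loca}), the prime $PA_{(M)}$ is a proper subideal of $\pi A_{(M)}$, so $\pi\notin PA_{(M)}$; writing every element of $PA_{(M)}$ as $\pi a$ and invoking primeness yields $PA_{(M)}=\pi\,PA_{(M)}=(MA_{(M)})(PA_{(M)})$. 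As $P$ is finitely generated and regular, the determinant trick gives $e\in MA_{(M)}$ with $(1-e)PA_{(M)}=0$, forcing $1=e\in MA_{(M)}$, a contradiction. Hence every regular prime of $A$ is maximal.

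With that established, Theorem~\ref{ispN} shows $A$ is an $N$-ring, and its proof in fact exhibits $A$ as a Prüfer ring, so every finitely generated regular ideal is invertible; together with the regular-Noetherian hypothesis this makes every regular ideal invertible. I would then factor a proper regular ideal $I$ by peeling off maximal ideals: choose a regular maximal $M\supseteq I$ (invertible), and set $I_1=IM^{-1}$, an integral invertible ideal with $I=MI_1$ and $I_1\supsetneq I$, the strictness coming from the invertibility of $I$. Iterating yields a strictly ascending chain of regular ideals, which terminates at $A$ by the ACC, and unwinding the recursion expresses $I=M_1\cdots M_s$ as a product of maximal, hence prime, ideals; thus $A$ is Dedekind. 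I expect the main obstacle to be the proof that every regular prime is maximal, where the interplay of Lemma~\ref{lemprin}, the regular localization $A_{(M)}$, and the Nakayama step must be handled with care; a secondary delicate point is moving from local (regular-localization) principality to genuine global invertibility, which I would channel through the Prüfer property emerging from the proof of Theorem~\ref{ispN}.
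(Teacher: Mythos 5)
Your proof is correct, but it takes a genuinely different route from the paper's at the two substantive points. For $(1)\Leftrightarrow(2)$ the paper simply cites \cite[Corollary 2.7]{SP}, whereas you prove $(1)\Rightarrow(2)$ directly (which suffices, since you close the cycle through $(3)$); your observation that Lemma~\ref{zero-dim} already forces an invertible regular prime to be maximal is exactly the right shortcut for showing a Dedekind ring is regular-Noetherian. For $(3)\Rightarrow(1)$ both you and the paper reduce the problem to showing that every regular prime is maximal, but the mechanisms differ: the paper takes a regular $x\in P$, passes to $A_{(M)}$, uses Lemma~\ref{zero-dim} to identify every radical factor of $xA$ with $M$ so that $xA=JM^{n}$, and iterates to produce an ascending chain of invertible ideals $J\subseteq J_1\subseteq\cdots$ whose stabilization (via regular-Noetherianity) yields the absurdity $A=M^{n_k}$; you instead choose $P$ maximal among regular primes below $M$, invoke Lemma~\ref{lemprin} to make $MA_{(M)}$ principal, derive $PA_{(M)}=(MA_{(M)})(PA_{(M)})$, and kill $P$ with the determinant trick. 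Both arguments lean on the ACC on regular ideals at the decisive moment and both are sound. Finally, the paper concludes by quoting Griffin's theorem that a regular-Noetherian $N$-ring is Dedekind, while you rebuild the prime factorization by peeling off invertible maximal ideals; this is more self-contained, but note that the proof of Theorem~\ref{ispN} actually exhibits each regular localization $A_{(M)}$ as a Pr\"ufer ring, not $A$ itself, so your claim that ``every finitely generated regular ideal of $A$ is invertible'' needs the standard globalization (valid for Marot rings, by Griffin's local characterization of Pr\"ufer rings) made explicit rather than attributed to that proof. With that one step spelled out, your argument is complete.
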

\proof 
For the equivalence of $(1)$ and $(2)$, see \cite[Corollary 2.7]{SP}. $(2)$ implies $(3)$ is clear by definition. So we just require to prove $(3)$ implies $(1)$.
Let $A$ be a regular-Noetherian ISP-ring. By \cite[Theorem 17]{Gf}, a regular-Noetherian $N$-ring is Dedekind so, following Theorem \ref{ispN}, it suffices to show that every regular prime ideal of $A$ is maximal. Assume, to the contrary, that $P\subset M$ are regular prime ideals of $A$. By Proposition \ref{Loca}, we may assume that $M$ is the only regular maximal ideal of $A$. Let $x\in P$ be a regular element of $A$. As $A$ is an ISP-ring, we can write $xA=JH_1\cdots H_n$ with $J$ an invertible ideal, $n\geq 1$ and all $H_i$'s are proper radical ideals. Since $xA$ is generated by a regular element, therefore each radical ideal $H_i$ is invertible and hence, by Lemma \ref{zero-dim}, is zero-dimensional. As $M$ is the only regular maximal ideal, we get $xA=JM^n$ and hence $J\subseteq P$. By the same argument as above we get $J=J_1M^{n_1}$ for some invertible ideal $J_1\subseteq P$ and $n_1\geq 1$. Therefore, inductively we get a chain of invertible ideals $J\subseteq J_1\subseteq  \ldots \subseteq J_k\ldots$ with $J_{k-1}=J_{k}M^{n_{k}}$ and $n_k\geq 1$. As $A$ is regular-Noetherian, we get $J_k=J_kM^{n_k}$ for some $k\gg 0$. Thus $A=M^{n_k}$, a contradiction. This finishes the proof.

\qed

\section{Strongly ISP-rings}
In this section, we extend the notion of ISP-domain in the other way to the setting of arbitrary rings. We call $A$ a strongly ISP-ring if every proper ideal of $A$ can be factored as an invertible ideal times a nonempty product of proper radical ideals. Clearly, strongly ISP-domains are exactly ISP-domains. ZPUI and von Neumann regular rings are trivial examples of strongly ISP-rings. Within the frame of total quotient rings, ISP-rings are exactly SSP-rings as there is no proper regular ideal. Note that every strongly ISP-ring is an ISP-ring. The converse is not true in general, as the following example shows.
\begin{exam}\label{example}
Let $(A,M)$ be a local ring which is not reduced and $E$ a nonzero A-module such that $ME=0$. Then $A\propto E$ is an ISP-ring which is not strongly ISP. Indeed, clearly $A\propto E$ is a total quotient ring and hence an ISP-ring. Now, assume that $0\propto E=(J\propto E)(H_1\propto E)\cdots (H_n\propto E)$ with $J\propto E$ an invertible ideal, $n\geq 1$ and all $H_i\propto E$'s are proper radical ideals. If $n=1$ we get $JH_1=0$ and hence $H_1=0$ since $J$ is an invertible ideal of $A$, a contradiction. If $n>1$ we get $E=0$, again a contradiction.
\end{exam}

The Strongly ISP-ring property is stable under factor with prime ideal, fraction and finite direct product ring formations.
\begin{prop}\label{sisp}
The following assertions hold:
\begin{itemize}

\item[(1)] If $A$ is a strongly ISP-ring and $P$ a prime ideal of $A$, then $A/P$ is an ISP-domain.
\item[(2)] If $S$ is a multiplicatively closed set of a strongly ISP-ring $A$, then $A_S$ is a strongly ISP-ring.
\item[(3)] A finite direct product of some family of rings $(A_i)_{i=1,...,n}$ is a strongly ISP-ring if and only if each $A_i$ is a strongly ISP-ring.
\end{itemize}
\end{prop}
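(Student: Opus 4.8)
The plan is to prove the three assertions separately, reducing (3) to the case $n=2$ by a routine induction. Each assertion is the strongly-ISP analogue of an earlier result --- (1) of Proposition~\ref{p1}, (2) of Proposition~\ref{Loca}, and (3) of Proposition~\ref{Pro} --- and the arguments run largely in parallel. In every case invertibility and radicality transport painlessly along the relevant map; the one recurring point that needs care is to guarantee that, after transport, a genuinely \emph{nonempty product of proper radical} factors survives, since a factor may degenerate to the unit ideal (under localization or in a product) or to the zero ideal (in a quotient).

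For (1), set $\overline{A}=A/P$, a domain, and let $\overline{I}$ be a proper ideal of $\overline{A}$, corresponding to a proper ideal $I$ of $A$ with $P\subseteq I$. Factoring $I=JH_1\cdots H_n$ in the strongly ISP-ring $A$ and using that a product of ideals is contained in each factor, I get $P\subseteq J$ and $P\subseteq H_i$ for all $i$, so every factor descends to $\overline{A}$ and $\overline{I}=\overline{J}\,\overline{H}_1\cdots\overline{H}_n$. Passing to the quotient preserves radicality of the $H_i$, while $\overline{J}$ is finitely generated and locally principal (checked by localizing at the maximal ideals of $\overline{A}$); when $J\supsetneq P$ the ideal $\overline{J}$ is a nonzero ideal of the domain $\overline{A}$, hence regular and therefore invertible. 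The only thing to watch is the degenerate possibility that some factor equals $P$: then $\overline{I}=0$, and I fall back on the trivial factorization $\overline{I}=\overline{A}\cdot(0)$, where $(0)$ is a proper radical ideal of $\overline{A}$. Otherwise all $\overline{H}_i$ are nonzero, hence proper radical, and $n\geq 1$ supplies the required nonempty product.

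For (2), I mimic Proposition~\ref{Loca}, the only difference being that the ideals involved need not be regular. Given a proper ideal $\mathcal I$ of $A_S$, its contraction $Q=\mathcal I\cap A$ is proper and satisfies $\mathcal I=QA_S$. Factoring $Q=JH_1\cdots H_n$ and extending gives $\mathcal I=(JA_S)(H_1A_S)\cdots(H_nA_S)$, where $JA_S$ is invertible and each $H_iA_S$ is radical. Some factors $H_iA_S$ may become the whole ring (exactly when $H_i\cap S\neq\emptyset$), so I discard those; as in Proposition~\ref{Loca}, not all of them can collapse, for otherwise $\mathcal I=JA_S$ would force $Q=J$ and hence $J=JH_1\cdots H_n$, contradicting the cancellability of the invertible ideal $J$. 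Thus at least one proper radical factor survives.

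For (3), I reduce to $B=A_1\times A_2$. Assuming each $A_i$ is strongly ISP, I factor a proper ideal $I=I_1\times I_2$ componentwise (taking $J_i=A_i$ with no radical factors when $I_i=A_i$) and reassemble using $(\prod_k H_{1,k})\times A_2=\prod_k(H_{1,k}\times A_2)$, $A_1\times(\prod_l H_{2,l})=\prod_l(A_1\times H_{2,l})$, and the fact that a product of invertible ideals $J_1\times J_2$ is invertible; since $I$ is proper, at least one component is proper, so at least one proper radical factor appears. Conversely, assuming $B$ strongly ISP and $I$ a proper ideal of $A_1$, I factor the proper ideal $I\times A_2=\mathcal J\,\mathcal H_1\cdots\mathcal H_n$ with $\mathcal J=J_1\times J_2$ and $\mathcal H_i=H_{i,1}\times H_{i,2}$. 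The main obstacle is ensuring the first components $H_{i,1}$ stay proper, and the cleanest device resolves it: reading off second components, $A_2=J_2\prod_i H_{i,2}$ forces $J_2=A_2$ and every $H_{i,2}=A_2$, whence properness of each $\mathcal H_i$ forces every $H_{i,1}\subsetneq A_1$. Reading off first components then yields $I=J_1H_{1,1}\cdots H_{n,1}$ with $J_1$ invertible (a component of the invertible ideal $\mathcal J$), all $H_{i,1}$ proper radical, and $n\geq 1$, as desired.
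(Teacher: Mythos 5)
Your proof is correct and follows essentially the same route as the paper: the paper writes out only part (1), exactly as you do (pushing the factorization of $L\supseteq P$ down to $A/P$ and checking invertibility locally), and dismisses (2) and (3) as easy, which you fill in by the same arguments used for Propositions~\ref{Loca} and~\ref{Pro}. Your extra care with the degenerate cases (the zero ideal in the quotient, collapsing factors under localization, and unit components in the product) only makes the argument more complete than the paper's.
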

\begin{proof} $(1)$ Let $L\supset P$ be a proper ideal of $A$. As $A$ is a strongly ISP-ring, we can write $L=JH_1\cdots H_n$ with $J$ an invertible ideal and all $H_i$'s are proper radical ideals. Since all ideals $J, H_1, \ldots , H_n$ contain $P$, we get 
\begin{center}
$L/P=(J/P)(H_1/P)\cdots(H_n/P)$. 
\end{center}
It is easy to check locally that $J/P$ is an invertible ideal and each $H_i/P$ a proper radical ideal.

The assertions $(2)$ and $(3)$ are easy to check.

\qed
\end{proof}

The following string of three lemmas are the straightforward translation from domain case to any arbitrary ring. The proofs are quite similar as for the domain case.
\begin{lem}
Let $A$ be a strongly ISP-ring and $P\subset M$  nonzero prime ideals of $A$. Then $P_M\subseteq M^2A_M$.
\end{lem}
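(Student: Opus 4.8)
The plan is to localize at $M$ and then transfer the problem to the domain $A/P$, exploiting the strong ISP-ring structure exactly as in Proposition~\ref{regprime} but without the Marot hypothesis. First I would replace $A$ by the localization $A_M$; by Proposition~\ref{sisp}(2) this is again a strongly ISP-ring, and it suffices to show $P_M \subseteq M^2 A_M$. So I assume from the outset that $(A,M)$ is local and aim to prove $P \subseteq M^2$.

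Next I would argue by contradiction: suppose $P \nsubseteq M^2$. Then I would pick an element $x \in M \setminus M^2$ lying outside $P$ (here the key point is that, since $P \nsubseteq M^2$, one can in fact arrange a suitable element witnessing the failure; more precisely I want an $x \in M$ with $x \notin P$ so that the ideal $(P,x^2)$ is proper). The strategy mirrors the ISP case: form the ideal $(P, x^2)$ and use the strongly ISP factorization to force it to be radical. Because $A$ is local with maximal ideal $M$ and $x \in M$, any invertible factor in a factorization of $(P,x^2)$ must be principal and in fact a unit times an ideal contained in $M$; the nonempty product of proper radical ideals then pins down the radical of $(P,x^2)$. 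The aim is to conclude that $(P,x^2)$ is itself a radical ideal, whence $(P,x^2) = \sqrt{(P,x^2)} \supseteq (P,x)$, giving $(P,x^2) = (P,x)$.

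From $(P,x^2) = (P,x)$ I would then derive a contradiction by passing to the quotient $A/P$, which is an ISP-domain by Proposition~\ref{sisp}(1). In $A/P$ the equality becomes $\bar{x}^2 A/P = \bar{x}\, A/P$ modulo $P$ (more carefully, $(\bar x^2) = (\bar x)$ as ideals of the domain $A/P$), and since $\bar x \neq 0$ in the domain we may cancel to obtain $\bar x \in \bar x^2 (A/P)$, i.e. $1 = \bar x u$ for some $u$, forcing $\bar x$ to be a unit of $A/P$. But $x \in M$, so $\bar x$ lies in the maximal ideal $M/P$ of the local ring $A/P$ and cannot be a unit — the desired contradiction. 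Hence $P \subseteq M^2$, which after undoing the localization is exactly $P_M \subseteq M^2 A_M$.

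The main obstacle I anticipate is the middle step: justifying that the factorization of $(P,x^2)$ actually forces this ideal to be radical. In the Marot ISP-ring setting (Proposition~\ref{regprime}) one exploits regular generators and regular localization, whereas here there is no regularity to lean on, so I expect to work entirely inside the local ring $A_M$ and track how the single proper radical factor (or the product of them) interacts with the invertible, hence principal, factor. Getting the bookkeeping right — ensuring the invertible part is a unit so that $\sqrt{(P,x^2)}$ coincides with a genuine radical factor and thus $(P,x^2)$ is radical — is the delicate point; the remaining cancellation-in-a-domain argument is routine.
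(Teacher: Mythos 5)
Your overall route is exactly the paper's: localize at $M$ via Proposition~\ref{sisp}(2), suppose $P\nsubseteq M^2$, pick $x\in M\setminus P$, show $(P,x^2)$ is radical, deduce $(P,x^2)=(P,x)$, and reach a contradiction by cancelling $\bar x$ in the domain $A/P$ (which is an ISP-domain by Proposition~\ref{sisp}(1)). The closing cancellation step is correct. However, the step you yourself single out as ``the delicate point'' --- that the strongly ISP factorization forces $(P,x^2)$ to be radical --- is the only place where the hypotheses $P\nsubseteq M^2$ and ``strongly ISP'' actually enter, and you never carry it out: your description (``ensuring the invertible part is a unit so that $\sqrt{(P,x^2)}$ coincides with a genuine radical factor'') restates the desired conclusion rather than proving it. As written, that is a genuine gap.

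The missing argument is short. Write $(P,x^2)=JH_1\cdots H_n$ with $J$ invertible, $n\geq 1$ and each $H_i$ a proper radical ideal. In the local ring every proper ideal is contained in $M$, so if at least two of the factors $J,H_1,\dots,H_n$ were proper we would get $(P,x^2)\subseteq M^2$ and hence $P\subseteq M^2$, contradicting the standing assumption $P\nsubseteq M^2$. Therefore $J=A$ and $n=1$, so $(P,x^2)=H_1$ is radical; since $x\in\sqrt{(P,x^2)}=(P,x^2)$ this yields $(P,x)\subseteq(P,x^2)$ and hence equality, after which your quotient argument finishes the proof. Two further small points: you should not insist on choosing $x\in M\setminus M^2$, since such an element need not exist (e.g.\ if $M=M^2$) and the argument only needs $x\in M\setminus P$; and the phrase ``a unit times an ideal contained in $M$'' for the invertible factor is not meaningful --- the relevant dichotomy is simply that $J$ is either all of $A$ or contained in $M$.
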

\proof By Proposition \ref{sisp}(2), we may assume that $A$ is local with maximal ideal $M$. Assume that $P\nsubseteq M^2$ and take $x\in M\setminus P$. Since $A$ is a strongly ISP-ring and $P\nsubseteq M^2$, therefore $(P,x^2)$ is a radical ideal. So $(P,x^2)=(P,x)$ which gives a contradiction after modding out by $P$.

\qed

\begin{lem}\label{slemprin}
Let $A$ be a strongly ISP-ring, $P\subset M$ prime ideals and $x\in M\setminus P$ such that $M$ is minimal over $(P,x)$. Then $MA_M$ is a principal ideal.
\end{lem}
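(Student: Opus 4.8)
The plan is to mirror the proof of Lemma \ref{lemprin}, with two substitutions suited to the non-regular setting: I replace the regular localization $A_{(M)}$ by the ordinary localization $A_M$, so that the ambient ring becomes genuinely local (and every invertible ideal is automatically principal), and I replace Proposition \ref{regprime} by the preceding lemma on nonzero primes. First I would invoke Proposition \ref{sisp}(2) to pass to $A_M$; since minimality of $M$ over $(P,x)$, primeness of $P$, and the condition $x\in M\setminus P$ all survive this localization, I may assume $A$ is local with maximal ideal $M$. Because $M$ is then the unique prime containing $(P,x)$, its radical is $M$, so $M$ is the only proper radical ideal containing $(P,x)$.

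The heart of the argument is to show $M$ is not idempotent, by contradiction. Assuming $M^2=M$ and factoring $(P,x)$ via the strongly ISP property, every radical factor must equal $M$, so $(P,x)=JM^n$ for an invertible ideal $J$; locality forces $J$ to be principal, say $J=yA$ with $y$ regular, and $M^n=M$ then gives $(P,x)=yM$. I would check that $y\notin P$ (otherwise $yM\subseteq P$, contradicting $x\notin P$), apply Lemma \ref{p=jp} to the principal, hence multiplication, ideal $yA\supseteq P$ to obtain $P=yP$, and write $x=yz$ with $z\in M$. Cancelling the regular element $y$ in $yM=(P,x)=y(P,z)$ yields $M=(P,z)$, which makes $M/P$ a nonzero principal idempotent ideal of the domain $A/P$ — impossible. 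Hence $M\neq M^2$.

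Finally I would pick $\pi\in M\setminus M^2$, where the preceding lemma does the work that Proposition \ref{regprime} did before: every nonzero prime $Q\subsetneq M$ satisfies $Q\subseteq M^2$, so (as $\pi\neq 0$) no proper prime other than $M$ can contain $\pi$, whence $\sqrt{\pi A}=M$ is the only proper radical ideal over $\pi A$. Factoring $\pi A=JM^n$ with $J$ principal forces $n=1$ (else $\pi\in M^2$), giving $\pi A=yM$ with $y$ regular; writing $\pi=ym_0$ and cancelling $y$ produces $M=m_0A$, so $MA_M$ is principal.

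I expect the main obstacle to be the idempotent step: one must verify carefully that the factorization $(P,x)=JM^n$ genuinely collapses to $yM$ and that the passage to $A/P$ really yields a nonzero principal idempotent ideal in a domain, since that is the precise point forcing the contradiction. The localization bookkeeping — that $M$ stays minimal over $(P,x)$ and that invertible ideals become principal over the now-local ring — is routine but should be recorded explicitly rather than taken for granted.
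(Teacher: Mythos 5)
Your argument is correct, but it takes a genuinely different route from the paper. The paper's proof is a two-line reduction: by Proposition \ref{sisp}(1) the factor ring $A/P$ is an ISP-domain in which $M/P$ is minimal over the nonzero principal ideal $x(A/P)$, and then it invokes Lemma~7 of the cited domain paper \cite{MT} to conclude. You instead re-run the whole argument of Lemma \ref{lemprin} inside the localization $A_M$: kill idempotency of $M$ via the factorization $(P,x)=yM$, Lemma \ref{p=jp}, and the impossibility of a nonzero proper principal idempotent ideal in the domain $A/P$; then pick $\pi\in M\setminus M^2$, use the preceding lemma ($Q\subseteq M^2A_M$ for every nonzero prime $Q\subset M$) to see that $M=\sqrt{\pi A}$ is the only proper radical ideal over $\pi A$, and extract $M=m_0A$ from $\pi A=yM$. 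All the individual steps check out (in particular $M$ really is the unique prime over $(P,x)$ after localizing, since $A_M$ is local and $M$ is minimal over $(P,x)$, and the cancellation of the regular generator $y$ is legitimate). What your version buys is self-containedness and the fact that it literally establishes the stated conclusion that $MA_M$ itself is principal, whereas the paper's reduction to $A/P$ only directly yields that $(M/P)(A/P)_{M/P}\cong MA_M/PA_M$ is principal and leaves the lift back to $MA_M$ implicit; the cost is length, since you duplicate the mechanism already recorded in Lemma \ref{lemprin} and in \cite{MT} rather than quoting it.
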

\proof By Proposition \ref{sisp}(1), factor ring $A/P$ is an ISP-domain. So, changing $A$ by $A/P$, we may assume that $P=0$ and $M$ is minimal over $xA$. Now apply \cite[Lemma 7]{MT} to get the desired result.

\qed

\begin{lem}\label{irsisp}
Any proper invertible radical ideal of a strongly ISP-ring is zero-dimensional.
\end{lem}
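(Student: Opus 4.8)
The plan is to imitate the proof of Lemma~\ref{zero-dim}, substituting the strongly-ISP analogues of the tools used there: Proposition~\ref{sisp}(2) in place of the regular localization, and Lemma~\ref{slemprin} in place of Lemma~\ref{lemprin}. Because in a strongly ISP-ring \emph{every} proper ideal factors (not merely the regular ones), I expect the Marot and regularity hypotheses that appeared in Lemma~\ref{zero-dim} to become unnecessary, which should actually streamline the bookkeeping.

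Concretely, I would argue by contradiction. Suppose $I$ is a proper invertible radical ideal of a strongly ISP-ring $A$ with $\dim(A/I)\geq 1$. First I would extract prime ideals $P\subset M$ with $I\subseteq P$ together with an element $x\in M\setminus P$ for which $M$ is minimal over $(P,x)$; this is obtained, exactly as in Lemma~\ref{zero-dim}, by taking $M$ minimal among the primes strictly containing a suitable $P\supseteq I$ with $\dim(A/P)\geq 1$. Lemma~\ref{slemprin} then makes $MA_M$ principal, and by Proposition~\ref{sisp}(2) the ring $A_M$ is again strongly ISP with $IA_M$ still a proper invertible radical ideal (all three properties survive the localization since $I\subseteq M$). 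Replacing $A$ by $A_M$, I may assume $A$ is local with $M=wA$. Since an invertible ideal of a local ring is principal and generated by a regular element, I can write $I=yA$ with $y$ regular. From $I\subseteq P\subset M=wA$ I get $y=bw$; the case $b\notin M$ would give $I=M$, which is impossible, so $b\in M$ and $y=aw^2$ for some $a$. Then $(aw)^2=ay\in I$, and radicalness yields $aw\in I$, whence $y=(aw)w\in wI$ and therefore $I=wI$. Inverting $I$ gives $A=wA=M$, the desired contradiction.

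The closing chain of identities is routine; the step I would treat as the main obstacle is the legitimacy of the reduction to the local case, namely checking that $IA_M$ remains simultaneously radical, invertible and proper, and that in the resulting local ring an invertible ideal is automatically principal with a regular generator. Once the maximal ideal is known to be principal, the radical hypothesis on $I$ does all the remaining work through the single relation $y=aw^2$.
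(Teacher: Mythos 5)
Your proof is correct and follows essentially the same route as the paper's: contradiction via a chain $I\subseteq P\subset M$ with $M$ minimal over $(P,x)$, Lemma~\ref{slemprin} to make $MA_M$ principal, localization via Proposition~\ref{sisp}(2), and then the computation $y=aw^2$, $aw\in\sqrt{I}=I$ forcing $M=A$. The only cosmetic difference is that you cancel the invertible ideal $I$ at the end where the paper cancels the regular generator $y$; both are valid.
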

\proof Let $A$ be a strongly ISP-ring and $I$ a proper invertible radical ideal of $A$. On the contrary assume that $\dim(A/I)\geq 1$. Then there exist two prime ideals $P\subset M$ and $x\in M-P$ such that $I\subseteq P$ and $M$ is minimal over $(P,x)$. By Lemma \ref{slemprin}, $MA_M$ is principal. Changing $A$ by $A_M$, we may assume that $A$ is local with maximal ideal $M$. Then $I=yA$ and $M=zA$ for some regular elements $y,z\in A$. As $I\subset M$, we get $y=az^2$ for some $a\in A$. So $az\in \sqrt{yA}=yA$ and hence $y=az^2\in yzA$. Thus $1\in zA=M$, a contradiction.

\qed

Recall that a ring $A$ is called special primary if Spec$(A)=\{M\}$ and each proper ideal of $A$ is a power of $M$.
Note that zero-dimensional rings are total quotient, that is, they have no non-unit regular element (\cite[p.10]{Huckaba}).
\begin{prop}\label{spr}
Let $A$ be a zero-dimensional local strongly ISP-ring with maximal ideal $M$. Then $A$ is special primary.
\end{prop}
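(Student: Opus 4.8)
The plan is to show that a zero-dimensional local strongly ISP-ring $(A,M)$ has the property that every proper ideal is a power of $M$, and that $\mathrm{Spec}(A)=\{M\}$. The second fact is immediate: since $A$ is zero-dimensional and local, $M$ is the unique prime ideal, so $\mathrm{Spec}(A)=\{M\}$ and in particular $M=\sqrt{(0)}$ is nilpotent if $A$ is Noetherian, though I will not assume Noetherianness directly. The heart of the matter is the factorization of proper ideals, so I would start there.

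First I would unwind what the strongly ISP property says in this very constrained setting. Since $A$ is zero-dimensional, it is a total quotient ring, so it has no non-unit regular elements; consequently the only invertible ideal is $A$ itself (an invertible ideal is regular and locally principal, but a proper regular ideal would need a non-unit regular element, which does not exist). This is the key reduction: in the factorization $I = J H_1\cdots H_n$ guaranteed by the strongly ISP hypothesis, the invertible factor $J$ must equal $A$, so every proper ideal is in fact a nonempty product of proper radical ideals. But $M$ is the unique prime, hence the unique proper radical ideal is $M=\sqrt{(0)}$ itself (the only radical ideals are intersections of primes, and the only prime is $M$, with the only other radical ideal being $A$). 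Therefore each $H_i=M$, and every proper ideal $I$ satisfies $I=M^n$ for some $n\ge 1$.

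The main obstacle I anticipate is justifying cleanly that the only proper radical ideal is $M$ and that the invertible factor collapses to $A$. For the radical-ideal point, I would argue that any proper radical ideal $H$ satisfies $H=\sqrt{H}=\bigcap_{P\supseteq H}P$, and since $M$ is the only prime, this intersection is just $M$ (provided $H$ is proper, so $H\subseteq M$ forces $\sqrt{H}=M$). For the invertibility point, I would invoke the observation recorded just before the proposition that zero-dimensional rings are total quotient rings, so every invertible ideal is improper; alternatively I can use Theorem~\ref{invert-prin} in the guise that with a unique maximal ideal every invertible ideal is principal and generated by a regular element, and a regular non-unit cannot exist here.

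Putting these together, for any proper ideal $I$ the strongly ISP factorization gives $I=JM^n$ with $J$ invertible, hence $J=A$, so $I=M^n$ with $n\ge 1$. This is precisely the defining condition for $A$ to be special primary, completing the proof. I expect the argument to be short once the two observations---that invertible ideals are trivial and that $M$ is the unique proper radical ideal---are in place; the only subtlety is to confirm that the factorization really does force the exponent $n\ge 1$ so that proper ideals, including the maximal ideal itself, are genuinely powers of $M$ rather than being all of $A$.
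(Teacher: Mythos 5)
Your argument is correct and is essentially the paper's own proof, which simply remarks that $A$ is an SSP-ring (the invertible factor collapses to $A$ because a zero-dimensional ring is a total quotient ring) and that $M$ is the only proper radical ideal; you have just spelled out those two observations in full detail. No gap here.
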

\proof It suffices to remark that $A$ is an SSP-ring and $M$ is the only proper radical ideal of $A$.

\qed

Recall that an almost multiplication ring is a ring whose localizations at its prime ideals are discrete rank one valuation domains or special primary rings. The following result is an analogue of \cite[Corollary 4]{MT}.
\begin{thm}\label{sispamr}
Let $A$ be a strongly ISP-ring such that every nonzero prime ideal of $A$ is maximal. Then $A$ is almost multiplication.

\end{thm}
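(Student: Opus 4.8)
The plan is to verify the defining property of an almost multiplication ring directly: I will show that for \emph{every} prime ideal $P$ of $A$, the localization $A_P$ is either a special primary ring or a discrete rank one valuation domain. By Proposition~\ref{sisp}(2) the ring $A_P$ is again strongly ISP, and it is local with maximal ideal $M=PA_P$, so the entire argument can be carried out inside $A_P$.

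First I would pin down the dimension of $A_P$. Any prime $Q$ of $A$ with $Q\subsetneq P$ would, if nonzero, be maximal by hypothesis, yet be properly contained in the proper ideal $P$, a contradiction; hence the only prime strictly below $P$ can be the zero ideal, which moreover forces $A$ to be a domain in that case. Consequently $\dim A_P\in\{0,1\}$, and $\dim A_P=1$ occurs only when $A$ is a domain, in which case $A_P$ is itself a domain with unique nonzero prime $M$. If $\dim A_P=0$, then $A_P$ is a zero-dimensional local strongly ISP-ring and Proposition~\ref{spr} immediately yields that $A_P$ is special primary.

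The substantive case is $\dim A_P=1$, a one-dimensional local ISP-domain. Here I would first apply Lemma~\ref{slemprin} to the chain $0\subset M$ together with any nonzero $x\in M$ (noting that $M$ is minimal over $xA_P$ since $\dim A_P=1$) to conclude that $M$ is principal, say $M=tA_P$ with $t\neq 0$; being principal and nonzero in a domain, $M$ is invertible. The key step is then to show that every nonzero proper ideal $H$ of $A_P$ is invertible. Factoring $H=JH_1\cdots H_n$ with $J$ invertible and each $H_i$ a proper radical ideal, I observe that no $H_i$ can be the zero ideal (else the product, and hence $H$, would vanish in the domain $A_P$); thus each $H_i$ is a nonzero proper radical ideal, and since the only primes of $A_P$ are $0$ and $M$, the only prime containing a nonzero ideal is $M$, whence $\sqrt{H_i}=M$ and therefore $H_i=M$. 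Hence $H=JM^n$ is a product of invertible ideals, so $H$ is invertible. An integral domain in which every nonzero ideal is invertible is Dedekind, and a one-dimensional local Dedekind domain is a discrete rank one valuation domain, completing this case and the proof.

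I expect the main obstacle to be exactly the radical-ideal bookkeeping in the one-dimensional case: one must be certain both that the factorization cannot involve the zero ideal (which uses that $A_P$ is a domain) and that every proper nonzero radical ideal collapses to $M$ (which uses $\dim A_P=1$). Lemma~\ref{irsisp} offers an alternative route, since the invertible radical ideal $M$ is forced to be zero-dimensional, but the direct radical computation above appears cleaner and self-contained.
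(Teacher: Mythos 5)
Your argument is correct, and its skeleton matches the paper's: both proofs reduce to the dichotomy ``either $A$ is a one-dimensional domain or the relevant localization is zero-dimensional'' (which, as you note, follows from the hypothesis that nonzero primes are maximal), and both dispatch the zero-dimensional case by Proposition~\ref{spr}. The difference lies in the one-dimensional case: the paper simply cites \cite[Corollary 4]{MT}, which says that one-dimensional ISP-domains are almost Dedekind, whereas you re-derive the needed local statement from scratch --- using Lemma~\ref{slemprin} with $P=0$ to get $M=tA_P$ principal, then observing that in a one-dimensional local domain every nonzero proper radical ideal must equal $M$, so every proper ideal factors as $JM^n$ and is invertible, forcing $A_P$ to be a DVR. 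All the steps check out (in particular, the exclusion of zero factors uses that $A_P$ is a domain, and the collapse $H_i=M$ uses that $0$ and $M$ are the only primes of $A_P$). What your route buys is self-containment: the theorem no longer leans on the external domain-theoretic result, and the local argument is short because $\mathrm{Spec}(A_P)$ has only two points. What the paper's route buys is brevity and an explicitly stronger global conclusion in the domain case (almost Dedekind, not merely ``each localization is a DVR''), though for the purpose of verifying the almost multiplication property the two are interchangeable. One cosmetic remark: since $A_P$ is local, you could skip the appeal to the Dedekind characterization entirely --- an invertible ideal of a local ring is principal, so $H=JM^n$ is already principal and $A_P$ is a local PID, hence a DVR.
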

\proof
If $A$ is a one-dimensional ISP-domain then, by \cite[Corollary 4]{MT}, $A$ is almost Dedekind. Assume that dimension of $A$ is zero. As remarked above $A$ is total quotient, so $A$ is an SSP-ring. By Proposition \ref{sisp}(2), we may assume that $A$ is local with maximal ideal $M$ and by Proposition \ref{spr}, zero-dimensional local strongly ISP-rings are special primary, as desired.

\qed

Recall that a ring $A$ is ZPI if every proper ideal of $A$ is a product of prime ideals. The following result also extends \cite[Corollary 4]{MT}. 
\begin{cor}\label{nsisp}
For a ring $A$ the following assertions are equivalent.
\begin{itemize}
\item[(1)] $A$ is a ZPI-ring.
\item[(2)] $A$ is a Noetherian SSP-ring.
\item[(3)] $A$ is a Noetherian strongly ISP-ring.
\end{itemize}
\end{cor}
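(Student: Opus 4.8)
The plan is to prove the cycle $(1)\Rightarrow(2)\Rightarrow(3)\Rightarrow(1)$, with essentially all of the content concentrated in $(3)\Rightarrow(1)$. For $(1)\Rightarrow(2)$ I would use the classical fact that a ZPI-ring is Noetherian (the companion SSP-characterization in \cite{SP}, analogous to the \cite[Corollary 2.7]{SP} already invoked in Corollary \ref{rnisp}, is the cleanest thing to quote); since prime ideals are radical, a ZPI-ring is automatically SSP. The implication $(2)\Rightarrow(3)$ is immediate: in an SSP-ring a proper ideal is a product of radical ideals, and after discarding any factors equal to $A$ (a proper product must keep at least one proper factor) this reads as the unit ideal $A$, which is invertible, times a nonempty product of proper radical ideals. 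This is exactly the implication SSP $\Rightarrow$ strongly ISP recorded in the figure, and the Noetherian hypothesis is untouched.

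For the substantive implication $(3)\Rightarrow(1)$ the strategy is to show that a Noetherian strongly ISP-ring $A$ is almost multiplication and then to quote the classical structure theorem that a Noetherian almost multiplication ring is a finite direct product of Dedekind domains and special primary rings, hence ZPI (see \cite{RG}). First I would bound the dimension: by Proposition \ref{sisp}(1) the quotient $A/P_0$ is an ISP-domain for every prime $P_0$, and being Noetherian it is Dedekind by \cite[Corollary 4]{MT}; taking $P_0$ a minimal prime forces $\dim A \le 1$. Next, to establish the almost multiplication property I would localize at an arbitrary prime $Q$: by Proposition \ref{sisp}(2) the ring $A_Q$ is again a Noetherian local strongly ISP-ring, of dimension at most one. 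If $\dim A_Q = 0$, then Proposition \ref{spr} identifies $A_Q$ as a special primary ring. If $\dim A_Q = 1$, I would take a minimal prime $P'\subset \fm_Q$ and, applying Krull's principal ideal theorem in the one-dimensional domain $A_Q/P'$, produce $x\in \fm_Q\setminus P'$ with $\fm_Q$ minimal over $(P',x)$; Lemma \ref{slemprin} then makes $\fm_Q$ principal. A Noetherian local ring with principal maximal ideal and positive dimension is a DVR (by Krull's intersection theorem every nonzero element is a unit times a power of a uniformizer, so the ring is a domain). Thus every localization of $A$ at a prime is a DVR or a special primary ring, i.e. $A$ is almost multiplication, and the structure theorem finishes the proof.

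The main obstacle to anticipate is that one cannot simply feed $A$ into Theorem \ref{sispamr}: its hypothesis ``every nonzero prime ideal is maximal'' genuinely \emph{fails} for Noetherian strongly ISP-rings. For instance $\Z\times\Z$ is ZPI, hence strongly ISP, yet its minimal prime $0\times\Z$ is nonzero and not maximal. The whole point of passing to $A_Q$ is that such troublesome nonzero, non-maximal primes disappear under localization (in the example, $(\Z\times\Z)_{p\Z\times\Z}\cong\Z_{p}$ is a DVR), so that the dimension-one local ring becomes a domain. Verifying this domain-ness — via the principal maximal ideal supplied by Lemma \ref{slemprin} — is the delicate step, and it is really a local reincarnation, valid in the Noetherian setting without the global hypothesis, of the dichotomy proved in Theorem \ref{sispamr}.

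A secondary point of care is to pin down the precise classical reference for ``Noetherian almost multiplication $\Rightarrow$ ZPI'' (equivalently the decomposition into Dedekind domains and special primary rings); this plays here the structural role that \cite[Theorem 17]{Gf} played in Corollary \ref{rnisp}. Once that is cited cleanly, together with Proposition \ref{sisp}, Proposition \ref{spr}, Lemma \ref{slemprin} and \cite[Corollary 4]{MT}, the argument closes the cycle.
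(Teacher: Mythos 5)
Your proposal is correct, but for the key implication $(3)\Rightarrow(1)$ it takes a genuinely different route from the paper, and the difference matters. The paper cites \cite[Theorem 13]{M} (Noetherian almost multiplication rings are ZPI) and reduces, via Theorem \ref{sispamr}, to showing that \emph{every nonzero prime ideal of $A$ is maximal}; it then assumes $P\subset M$ are nonzero primes, localizes at $M$, arranges $M$ to be minimal over $(P,x)$, invokes Lemma \ref{slemprin} to make $M$ principal, and derives $P=yP$, hence $P=0$ by Nakayama, ``a contradiction.'' Your worry about Theorem \ref{sispamr} is exactly on target: the intermediate claim is false (your example $\Z\times\Z$ is ZPI, hence a Noetherian strongly ISP-ring by the already-established implications, yet $0\times\Z$ is a nonzero non-maximal prime), and the paper's contradiction is illusory because after localizing at $M$ the ideal $PA_M$ may vanish --- in the example $PA_M=0$ inside $A_M\cong\Z_{(p)}$ --- so concluding $P=0$ contradicts nothing. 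Your localize-first strategy repairs this: you bound $\dim A\le 1$ via Proposition \ref{sisp}(1) and \cite[Corollary 4]{MT}, then verify the almost multiplication property prime by prime, using Proposition \ref{spr} in dimension zero and Lemma \ref{slemprin} together with the standard fact that a Noetherian local ring of positive dimension with principal maximal ideal is a DVR in dimension one, before quoting the same classical structure theorem. The cost is the extra dimension bound and the DVR recognition step; what it buys is an argument that does not pass through a false global statement. The remaining implications match the paper: $(1)\Leftrightarrow(2)$ is \cite[Corollary 3.5]{SP} and $(2)\Rightarrow(3)$ is immediate from the definitions.
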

\proof 
For the equivalence of $(1)$ and $(2)$, see \cite[Corollary 3.5]{SP}. $(2)$ implies $(3)$ is clear by definition. So we just require to prove $(3)$ implies $(1)$. Let $A$ be a Noetherian strongly ISP-ring. By \cite[Theorem 13]{M}, any Noetherian almost multiplication ring is ZPI-ring, so following Theorem \ref{sispamr}, it suffices to prove that every nonzero prime ideal of $A$ is maximal. Assume, to the contrary, that $P\subset M$ are nonzero prime ideals of $A$. By Proposition \ref{sisp}(2), we may assume that $A$ is local with maximal ideal $M$. Pick an element $x\in M\setminus P$. Shrinking $M$, we may assume that $M$ is minimal over $(P,x)$. By Lemma \ref{slemprin}, $M$ is principal, that is, $M=yA$ for some $y\in A$. As $P\subset M$, we get $P=yP$. Since $A$ is a Noetherian local ring, so $P=0$ by Nakayama's lemma, a contradiction. This finishes the proof.

\qed

Recall that a nonzero $A$-module $E$ is a multiplication module if each submodule
of $E$ has the form $IE$ for some ideal $I$ of $A$. Following \cite{Anderson2}, we call an ideal of $A\propto E$ homogeneous if it has the form $I\propto V$, where $I$ is an ideal
of $A$ and $V$ a submodule of $E$ such that $IE\subseteq V$. Our next result collects some useful fact.
\begin{prop}\label{Strong}
Let $A$ be a ring and $E$ an A-module.
\begin{itemize}

\item[(1)] If $A\propto E$ is a strongly ISP-ring, then so is $A$.

\item[(2)] If $A$ is a von Neumann regular ring and $E$ is a multiplication A-module, then $A\propto E$ is a strongly ISP-ring.

\item[(3)] If $A\propto E$ is a strongly ISP-ring and $E=sE$ for each $s\in S$, then $E$ is a multiplication module, where $S=A\backslash (Z(A)\cup Z(E))$ .

\end{itemize}
\end{prop}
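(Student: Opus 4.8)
The plan is to handle the three parts of Proposition~\ref{Strong} separately, since each addresses a different direction or hypothesis. For part~(1), the strategy is to transfer a factorization from $A$ up from $A \propto E$ by using the homogeneous-ideal machinery from \cite{Anderson2}. Given a proper ideal $L$ of $A$, I would consider $L \propto E$, which is a proper ideal of $A \propto E$. Applying the strongly ISP hypothesis gives a factorization $L \propto E = (J \propto E)(H_1 \propto E) \cdots (H_n \propto E)$ with $J \propto E$ invertible and each $H_i \propto E$ a proper radical ideal; here I would invoke \cite[Theorem 3.9]{Anderson2} to guarantee the factors are themselves homogeneous of this specific form, and \cite[Theorem 3.2(3)]{Anderson2} to multiply them out componentwise in the first coordinate, yielding $L = J H_1 \cdots H_n$. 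The remaining task is to check that $J$ is invertible in $A$ and each $H_i$ is a proper radical ideal of $A$. Invertibility of $J$ follows because invertibility of $J \propto E$ forces $J$ to be finitely generated, regular, and locally principal (compare the reasoning in the proof of Theorem~\ref{exten} and in Lemma~\ref{inver}), and the radical/proper conditions descend to the first coordinate. This is essentially the same argument already used in the forward direction of Theorem~\ref{exten}, so it should be routine.

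For part~(2), the plan is to show that when $A$ is von Neumann regular and $E$ is a multiplication module, every proper ideal of $A \propto E$ is itself a proper radical ideal, so the trivial length-one factorization (with the unit ideal as the invertible factor, or equivalently recognizing $A \propto E$ as an SSP-ring) suffices. The key input is that $A \propto E$ has a very restricted ideal structure in this setting: because $A$ is von Neumann regular, $A$ is reduced and zero-dimensional, and because $E$ is a multiplication module, every submodule of $E$ has the form $IE$. I would use the classification of ideals of $A \propto E$ to argue that every ideal is homogeneous of the form $I \propto IE$ with $I$ radical (idempotent) in $A$, and that such ideals are automatically radical in $A \propto E$. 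The cleanest route is probably to verify directly that $A \propto E$ is an SSP-ring (every proper ideal is a product of radical ideals, in fact each proper ideal is already radical), which immediately yields the strongly ISP property since every SSP-ring is trivially strongly ISP by taking the invertible factor to be $A \propto E$ itself.

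For part~(3), the goal is the converse-type statement: the strongly ISP property together with $E = sE$ for all $s \in S$ forces $E$ to be a multiplication module. Here I would take an arbitrary submodule $V$ of $E$ and aim to produce an ideal $I$ of $A$ with $V = IE$. The natural candidate ideal to build is $0 \propto V$ or a homogeneous ideal closely related to it inside $A \propto E$; applying the strongly ISP factorization to this ideal and then reading off the structure of the factors (again via the homogeneous-ideal results of \cite{Anderson2}) should pin down $V$ as $IE$ for a suitable $I$. The hypothesis $E = sE$ for $s \in S$ is what guarantees the relevant ideals are regular or behave well under the factorization, mirroring its role in Theorem~\ref{exten}.

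The main obstacle I anticipate is part~(3), the implication that strongly ISP forces $E$ to be a multiplication module. Parts~(1) and~(2) are descent and a direct construction respectively, and both lean on the already-established homogeneous-ideal toolkit. Part~(3), by contrast, requires extracting structural information about \emph{arbitrary} submodules of $E$ from the factorization hypothesis, and the delicate point will be choosing the right ideal of $A \propto E$ to feed into the strongly ISP property and then correctly interpreting the invertible and radical factors of its factorization back in terms of $E$. Controlling the non-homogeneous ideals and ensuring the recovered ideal $I$ actually satisfies $V = IE$ (rather than merely $IE \subseteq V$ or $V \subseteq IE$) is where the argument will need the most care, and the condition $E = sE$ for $s \in S$ will have to be used precisely to close this gap.
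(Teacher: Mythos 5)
Your plan for parts (1) and (3) coincides with the paper's: part (1) is the descent argument the paper leaves as ``straightforward,'' and for part (3) the paper does exactly what you propose --- factor $0\propto V=(J\propto E)(H_1\propto E)\cdots(H_n\propto E)$ using \cite[Theorem 3.2(3)]{Anderson2} and \cite[Theorem 3.9]{Anderson2}, note that the product collapses to $(JH_1\cdots H_n)\propto QE$ for a suitable ideal $Q$, and read off $V=QE$. For part (2) the paper simply cites \cite[Proposition 3.6]{SP} for the fact that $A\propto E$ is an SSP-ring, which is also the route you aim at.

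The genuine gap is in your justification of part (2). It is false that every proper ideal of $A\propto E$ is radical. Since $(0\propto E)^2=0$, one has $\mathrm{Nil}(A\propto E)\supseteq 0\propto E$, and more precisely $\sqrt{I\propto V}=\sqrt{I}\propto E$; so an ideal $I\propto V$ is radical only when $V=E$. Already for $F$ a field (von Neumann regular) and $E=F$ (a multiplication module), the zero ideal of $F\propto F$ is proper and not radical, so the ``trivial length-one factorization'' you describe does not exist. The SSP property still holds, but it requires exhibiting an honest product: writing a homogeneous ideal as $I\propto V$ with $IE\subseteq V$ and $V=QE$ for some ideal $Q\supseteq I$ (this uses that $E$ is a multiplication module), one checks
$$(I\propto E)(Q\propto E)=IQ\propto (I+Q)E=I\propto QE=I\propto V,$$
where $IQ=I$ because every ideal of a von Neumann regular ring is idempotent and $I\subseteq Q$; both factors are radical. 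One must also know that every ideal of $A\propto E$ is homogeneous in this setting, which is again where the hypotheses on $A$ and $E$ enter. All of this is precisely the content of the cited \cite[Proposition 3.6]{SP}, so the cleanest repair is to invoke that result as the paper does rather than the (incorrect) claim that every proper ideal is already radical.
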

\proof $(1)$ Straightforward.

$(2)$ By \cite[Proposition 3.6]{SP}, $A\propto E$ is an SSP-ring and hence $A\propto E$ is a strongly ISP-ring.

$(3)$ Assume that $A\propto E$ is a strongly ISP-ring and $V$ a submodule of $E$. By \cite[Theorem 3.2 (3)]{Anderson2} and \cite[Theorem 3.9]{Anderson2}, we can write $0\propto V=(J\propto E)(H_1\propto E)\cdots (H_n\propto E)$ with $J\propto E$ an invertible ideal, $n\geq 1$ and all $H_i\propto E$'s are proper radical ideals. Hence $0\propto V=(JH_1\cdots H_n)\propto QE$ for some ideal $Q$ of $A$. Then $V=QE$, as desired.

\qed

We get the following result, where $Supp(E)$ denotes the support of an $A$-module $E$.
\begin{prop}
Let $A\propto E$ be a strongly ISP-ring in which every prime ideal is maximal. Then for each maximal ideal $M\in Supp(E)$, $A_M$ is a field and $E_M\simeq A_M$.
\end{prop}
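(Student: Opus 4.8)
The plan is to localize at $M\propto E$ and reduce the whole question to the structure of a single special primary ring, then read off both conclusions by comparing the two coordinates of the trivial extension.

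First I would localize. By Proposition~\ref{sisp}(2) the ring $(A\propto E)_{M\propto E}$ is again a strongly ISP-ring, and since every prime of $A\propto E$ is maximal this localization is local with a unique prime, hence zero-dimensional. Proposition~\ref{spr} (equivalently Theorem~\ref{sispamr}) then shows it is special primary. Using the standard identification $(A\propto E)_{M\propto E}\cong A_M\propto E_M$, I obtain that $R:=A_M\propto E_M$ is special primary with maximal ideal $\mathfrak{M}=MA_M\propto E_M$. Note that $E_M\neq 0$ because $M\in\mathrm{Supp}(E)$.

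Next I would run the coordinate computation. From the product rule $(I_1\propto V_1)(I_2\propto V_2)=I_1I_2\propto(I_1V_2+I_2V_1)$ one gets $\mathfrak{M}^{k}=(MA_M)^{k}\propto (MA_M)^{k-1}E_M$ for all $k\geq 1$. Since $0\propto E_M$ is a proper ideal of the special primary ring $R$, it must equal $\mathfrak{M}^{k}$ for some $k\geq 1$; comparing coordinates gives $(MA_M)^{k}=0$ and $E_M=(MA_M)^{k-1}E_M$. If $k\geq 2$, iterating the second identity yields $E_M=(MA_M)^{m(k-1)}E_M$ for every $m$, and taking $m$ with $m(k-1)\geq k$ forces $E_M\subseteq (MA_M)^{k}E_M=0$, contradicting $E_M\neq 0$. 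Hence $k=1$, so $0\propto E_M=MA_M\propto E_M$ and therefore $MA_M=0$; that is, $A_M$ is a field.

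Finally, with $A_M$ a field I would classify the proper ideals of $R=A_M\propto E_M$. Any $(a,e)$ with $a\neq 0$ is a unit, so every proper ideal has the form $0\propto V$ for an $A_M$-subspace $V\subseteq E_M$. Here $\mathfrak{M}=0\propto E_M$ and $\mathfrak{M}^{2}=0$, so the only proper powers of $\mathfrak{M}$ are $0\propto E_M$ and $0$; since $R$ is special primary these are the only proper ideals, forcing $E_M$ to have no subspaces other than $0$ and $E_M$. Thus $E_M$ is one-dimensional over the field $A_M$ and $E_M\cong A_M$. The step I expect to be the main obstacle is extracting usable information from the single equality $0\propto E_M=\mathfrak{M}^{k}$: the decisive observation is that its first coordinate already forces $MA_M$ to be a nilpotent ideal, and marrying this nilpotency to the second-coordinate relation $E_M=(MA_M)^{k-1}E_M$ is precisely what both collapses $k$ to $1$ and makes $A_M$ a field, the support hypothesis $E_M\neq 0$ being the input that excludes the degenerate case $k\geq 2$.
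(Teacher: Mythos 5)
Your proof is correct, and its skeleton matches the paper's: localize at $M\propto E$, identify the localization with $A_M\propto E_M$, show it is special primary, and read off the two conclusions. The differences are in how the last two steps are executed. The paper reaches ``special primary'' by invoking Theorem~\ref{sispamr} (almost multiplication), which leaves a dichotomy (discrete rank one valuation domain or special primary) that is only implicitly resolved; you instead observe that the localization has a unique prime, hence is zero-dimensional local, and apply Proposition~\ref{spr} directly, which bypasses the valuation-domain case altogether (and indeed that case is impossible here, since $0\propto E_M$ is a nonzero nilpotent ideal). More substantially, the paper then delegates the conclusion ``$A_M$ is a field and $E_M\simeq A_M$'' to the proof of Lemma~4.9 in Anderson--Winders, whereas you supply a self-contained coordinate computation: writing $\mathfrak{M}^{k}=(MA_M)^{k}\propto (MA_M)^{k-1}E_M$, equating $0\propto E_M$ with some $\mathfrak{M}^{k}$, and using the nilpotency of $MA_M$ together with $E_M=(MA_M)^{k-1}E_M$ to force $k=1$, after which the classification of ideals $0\propto V$ gives simplicity of $E_M$. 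Your version costs a page of routine verification but buys independence from the cited source; both arguments are sound.
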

\proof Let $M\in Supp(E)$ be a maximal ideal. By Proposition \ref{sisp} and \cite[Theorem 4.1(2)]{Anderson2}, $(A\propto E)_{M\propto E}=A_M\propto E_M$ is a strongly ISP-ring whose every prime ideal is maximal. By Theorem \ref{sispamr}, $A_M\propto E_M$ is an almost multiplication ring. Hence $A_M\propto E_M$ is a discrete rank one valuation domain or a special primary ring. By the proof of \cite[Lemma 4.9]{Anderson2}, $A_M$ is a field and $E_M\simeq A_M$ since $E_M\neq 0$, as desired.

\qed

Recall that an $A$-module $E$ is simple if it has no proper nonzero submodule. Moreover, $E$ is called divisible if for every regular element $a\in A$ and for every $e\in E$ there exists  $e'\in E$ such that $e=ae'$. Equivalently, $E=aE$ for every regular element $a\in A$. Our next result gives necessary and sufficient conditions for particular trivial ring extension of $A$ by $E$ to be a strongly ISP-ring.
\begin{prop}\label{car}
Let $A$ be an integral domain and $E$ a divisible $A$-module. Then $A\propto E$ is a strongly ISP-ring if and only if $A$ is an ISP-domain and $E$ a simple $A$-module.
\end{prop}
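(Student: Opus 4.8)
The plan is to prove the two implications separately, in each case reducing to the transfer results assembled in Proposition~\ref{Strong} so that no direct factorization of the ideals of $A\propto E$ is needed. Throughout one assumes $E\neq 0$, since otherwise $A\propto E=A$ and the ``$E$ simple'' side of the equivalence cannot hold.

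For the direction ``$A\propto E$ strongly ISP $\Rightarrow$ $A$ ISP-domain and $E$ simple'', I would first apply Proposition~\ref{Strong}(1) to obtain that $A$ is strongly ISP; since $A$ is a domain, this says exactly that $A$ is an ISP-domain. To reach simplicity of $E$, I would next verify the hypothesis of Proposition~\ref{Strong}(3): because $Z(A)=\{0\}$, every $s\in S=A\setminus(Z(A)\cup Z(E))$ is nonzero, so $E=sE$ by divisibility, and Proposition~\ref{Strong}(3) then makes $E$ a multiplication module. The decisive step is then purely module-theoretic: given a submodule $V=IE$, if $I\neq 0$ one picks a nonzero $a\in I$ and uses divisibility to get $E=aE\subseteq IE=V$, forcing $V=E$, whereas $I=0$ gives $V=0$. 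Hence $0$ and $E$ are the only submodules of $E$, i.e. $E$ is simple.

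For the converse I would start from the observation that the two hypotheses that $E$ is simple and divisible already pin down $A$: writing the simple module as $E\cong A/M$ with $M$ maximal, divisibility forces $aE=E$ and hence $a\notin M$ for every nonzero $a$, so $M=0$ and $A$ is a field (so that the ISP-domain hypothesis is automatically satisfied). Then $A$ is von Neumann regular, and $E$, being simple, is a multiplication module since its only submodules $0=0\cdot E$ and $E=A\cdot E$ are of the form $IE$. Proposition~\ref{Strong}(2) now applies directly and yields that $A\propto E$ is a strongly ISP-ring.

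The appeals to Proposition~\ref{Strong} are immediate, so the only genuine content lies in the two elementary but decisive observations that I expect to be the crux: that a divisible multiplication module over a domain must be simple, and conversely that a divisible simple module can live only over a field. Both of these let the proof sidestep any explicit manipulation of invertible and radical ideals inside $A\propto E$, delegating all the real work to the previously established transfer theorems.
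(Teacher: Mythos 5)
Your proof is correct, and the forward direction is essentially the paper's argument (the paper factors $0\propto V$ directly and uses divisibility to get $V=E$; you reach the same conclusion by first invoking Proposition~\ref{Strong}(3) to get the multiplication property and then applying divisibility to $V=IE$ --- the same computation, one layer of indirection higher). The converse, however, is genuinely different. The paper works with a general ISP-domain $A$: it uses \cite[Corollary 3.4]{Anderson2} to see that every ideal of $A\propto E$ is of the form $I\propto E$, and then lifts the factorization $I=\widetilde{J}\widetilde{H}_1\cdots\widetilde{H}_n$ from $A$ to $A\propto E$ term by term. You instead observe that a nonzero simple module $E\cong A/M$ which is also divisible forces $M=0$, so $A$ must be a field and $E\cong A$; then Proposition~\ref{Strong}(2) (von Neumann regular base, multiplication module) finishes. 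Your route is shorter and surfaces a structural fact the paper's proof hides: for nonzero divisible $E$ the right-hand side of the equivalence can only hold when $A$ is a field, so the ``$A$ is an ISP-domain'' hypothesis is automatic in that direction and the proposition really asserts that $A\propto E$ is strongly ISP iff $A$ is a field and $E\cong A$. What the paper's direct lifting buys in exchange is independence from Proposition~\ref{Strong}(2) (and hence from the SSP-ring machinery behind it), and an argument that would survive in settings where the hypotheses do not collapse. Your preliminary remark that $E\neq 0$ must be assumed (else the ``simple'' side fails while $A\propto 0=A$ may well be strongly ISP) is a legitimate edge case the paper leaves implicit.
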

\proof Assume that $A\propto E$ is a strongly ISP-ring. By Proposition \ref{Strong}(a), $A$ is a strongly ISP-domain (i.e. $A$ is an ISP-domain). Let $V$ be a nonzero submodule of $E$. Then we can write $0\propto V=(J\propto E)(H_1\propto E)\cdots (H_n\propto E)$ with $(J\propto E)$ an invertible ideal, $n\geq 1$ and all $H_i\propto E$'s are proper radical ideals. Since $E$ is divisible, therefore $0\propto V=(JH_1\cdots H_n)\propto E$ which gives $V=E$. Conversely, let $L$ be an ideal of $A\propto E$. By \cite[Corollary 3.4]{Anderson2}, $L=I\propto E$ for some ideal $I$ of $A$. If $I=0$, then by \cite[Theorem 3.2]{Anderson2} $0\propto E$ is a radical ideal of $A\propto E$. So, assume that $I\neq 0$. Since $A$ is a strongly ISP-ring, therefore $I=\widetilde{J}\widetilde{H}_1\cdots \widetilde{H}_n$, where $\widetilde{J}$ is an invertible ideal, $n\geq 1$ and each $\widetilde{H}_i$ a radical ideal. We get that $L$ can be factored as an invertible ideal times a nonempty product of proper radical ideals, as desired.

\qed

\begin{rem}
In general, $A\propto E$ need not be a strongly ISP-ring. Indeed, let $A$ be an ISP-domain, $K=qf(A)$ and $E$ a K-vector space such that $dim_{K}(E)>1$. By Proposition \ref{car}, $A\propto E$ is not a strongly ISP-ring.
\end{rem}

The following result studies the strongly ISP-ring property for amalgamated duplication ring $A\bowtie I$.
\begin{thm}\label{dupli}
Let $A$ be a ring and $I$ an ideal of $A$.
\begin{itemize}
\item[(1)] If $A\bowtie I$ is a strongly ISP-ring, then so is $A$.
\item[(2)] If  $I$ is a finitely generated idempotent ideal of $A$, then $A\bowtie I$ is a strongly ISP-ring if and only if so is $A$.
\end{itemize}
\end{thm}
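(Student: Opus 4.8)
The plan is to treat the two parts separately, with (1) closely paralleling the proof of Theorem~\ref{dup}(1) and (2) reducing to a finite-direct-product computation via an idempotent decomposition.

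For part (1), I would start from an arbitrary proper ideal $Q$ of $A$ and form $Q\bowtie I=\{(q,q+i)\mid q\in Q,\ i\in I\}$. A direct check shows this is a proper ideal of $A\bowtie I$ containing $0\times I$ (properness because $(1,1)\notin Q\bowtie I$ when $1\notin Q$). The key structural fact I would invoke is that an ideal $L$ of $A\bowtie I$ contains $0\times I$ if and only if $L=H\bowtie I$ for the ideal $H=p_1(L)$, where $p_1\colon A\bowtie I\to A$ is the first projection, with kernel $0\times I$. Since $A\bowtie I$ is strongly ISP, I can write $Q\bowtie I=JH_1\cdots H_n$ with $J$ invertible and each $H_i$ a proper radical ideal; because a product of ideals is contained in every factor, each of $J,H_1,\dots,H_n$ contains $Q\bowtie I\supseteq 0\times I$, hence has the form $\widetilde{J}\bowtie I$ or $\widetilde{H}_i\bowtie I$. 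Lemma~\ref{inver} gives that $\widetilde{J}$ is invertible, and the isomorphism $A\cong(A\bowtie I)/(0\times I)$ induced by $p_1$ shows that each $\widetilde{H}_i$ is a proper radical ideal of $A$, radicality being preserved because $H_i$ contains the kernel, so that $A/\widetilde{H}_i\cong(A\bowtie I)/H_i$ is reduced. Applying $p_1$ to the factorization then yields $Q=\widetilde{J}\widetilde{H}_1\cdots\widetilde{H}_n$, so $A$ is strongly ISP. This mirrors Theorem~\ref{dup}(1) but works for all proper ideals rather than only regular ones, so no regularity hypothesis on $I$ is needed here.

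For part (2), the forward implication is already part (1), so I only need to show: if $A$ is strongly ISP and $I$ is finitely generated and idempotent, then $A\bowtie I$ is strongly ISP. The crucial observation is that a finitely generated idempotent ideal is generated by an idempotent element, say $I=eA$ with $e^2=e$. Writing $A=A_1\times A_2$ with $A_1=eA$ and $A_2=(1-e)A$, so that $I=A_1\times 0$, I would verify the ring isomorphism $A\bowtie I\cong A_1\times A_1\times A_2$ sending $\bigl((a_1,a_2),(a_1+c,a_2)\bigr)$ to $(a_1,a_1+c,a_2)$. Granting this, Proposition~\ref{sisp}(3) gives that $A\bowtie I$ is strongly ISP if and only if $A_1$ and $A_2$ are both strongly ISP, which by the same proposition is equivalent to $A=A_1\times A_2$ being strongly ISP.

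The routine parts are the ideal computations in (1) and the verification that the displayed map in (2) respects the ring operations; the step that carries the real content is the passage from ``$I$ finitely generated and idempotent'' to the clean decomposition $A\bowtie I\cong A_1\times A_1\times A_2$, which converts the problem into a direct-product statement amenable to Proposition~\ref{sisp}(3). The one point that deserves care is the classical fact that a finitely generated idempotent ideal is generated by an idempotent (a Nakayama/determinant-trick argument), on which the whole decomposition rests.
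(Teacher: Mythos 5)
Your proposal is correct and follows essentially the same route as the paper: part (1) uses the fact that each factor of the factorization of $Q\bowtie I$ contains $0\times I$ and hence has the form $\widetilde{H}\bowtie I$, together with Lemma~\ref{inver}, and part (2) uses that a finitely generated idempotent ideal is generated by an idempotent to obtain $A\bowtie I\cong I\times I\times C$ and then invokes Proposition~\ref{sisp}(3). Your write-up is merely somewhat more explicit than the paper's about why the factors in (1) are of the stated form and about passing to $A\cong(A\bowtie I)/(0\times I)$ to check radicality.
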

\proof  $(1)$ Let $L$ be an ideal of $A$. As $A\bowtie I$ is a strongly ISP-ring, we can write $L\bowtie I=(J\bowtie I)(H_1\bowtie I)\cdots (H_n\bowtie I)$ with $J\bowtie I$ an invertible ideal, $n\geq 1$ and all $H_i\bowtie I$,s are proper radical ideals. We  get $L=JH_1\cdots H_n$. By Lemma \ref{inver}, $J$ is invertible. Also, each $H_i$ is a proper  radical ideal of $A$, as desired.

$(2)$ We only need to prove ``only if" part. Suppose that $A$ is a strongly ISP-ring. As $I$ is a finitely generated idempotent ideal, we get $I=Ae$ for some idempotent element $e\in A$. Then $A$ is isomorphic to $I\times C$, where $C=A(1-e)$. Since $A$ is a strongly ISP-ring, therefore $I$ and $C$ are also strongly ISP-rings, cf. [Proposition \ref{sisp}(3)]. So, $A\bowtie I\simeq (I\times C)\bowtie (I\times 0)\simeq I\times I\times C$ is a strongly ISP-ring, again cf. [Proposition \ref{sisp}(3)].

\qed


We conclude by giving an example of a ring $A$ that is a strongly ISP-ring while $A\bowtie I$ is not.

\begin{exam}
Let $F$ be a field, $A=F\propto F$ and $I=0\propto F$ an ideal of $A$. Then $A$ is a strongly ISP-ring, by Proposition \ref{Strong}. Notice that $A\bowtie I\simeq A\propto I$. Hence, by Example \ref{example}, $A\bowtie I$ is not a strongly ISP-ring since $A$ is not a reduced ring.

\end{exam}
 
{\bf Acknowledgements.} We are grateful to Tiberiu Dumitrescu for his helpful comments on the initial draft of the paper. We also thank the referee whose comments and suggestions improved our paper.

\end{document}